\documentclass[11pt,letterpaper,reqno]{amsart}

\usepackage[T1]{fontenc}
\usepackage{amsmath,amssymb,amsthm,mathtools}
\usepackage[margin=1.08in]{geometry}
\usepackage{microtype}
\usepackage{xcolor}
\colorlet{BLACK}{black}
\IfFileExists{libertinus.sty}{\usepackage{libertinus}}{}

\usepackage{enumitem}
\usepackage{hyperref}
\usepackage{aliascnt}
\usepackage[nameinlink,capitalize]{cleveref}

\hypersetup{
  colorlinks=true,
  linkcolor=black,
  citecolor=black,
  urlcolor=black,
  pdftitle={The Geometry of Real Anisotropic Bohnenblust--Hille Constants},
  pdfauthor={Daniel Nunez-Alarcon, Daniel M. Pellegrino, J. Santos, D. Serrano-Rodr\'iguez, and Eduardo V. Teixeira}
}
\setlist{itemsep=0.25em,topsep=0.45em}
\numberwithin{equation}{section}

\theoremstyle{plain}
\newtheorem{mainresultA}{Theorem}

\newtheorem{mainresultB}{Theorem}

\newtheorem{mainresultC}{\textcolor{black}{Theorem}}

\newtheorem{mainresultD}{Theorem}

\newtheorem{mainresultE}{Theorem}

\theoremstyle{plain}

\newaliascnt{proposition}{theorem}
\newtheorem{proposition}[proposition]{Proposition}
\aliascntresetthe{proposition}
\newaliascnt{lemma}{theorem}
\newtheorem{lemma}[lemma]{Lemma}
\aliascntresetthe{lemma}
\newaliascnt{corollary}{theorem}
\newtheorem{corollary}[corollary]{Corollary}
\aliascntresetthe{corollary}

\theoremstyle{definition}
\newaliascnt{definition}{theorem}

\aliascntresetthe{definition}

\theoremstyle{remark}
\newaliascnt{remark}{theorem}

\aliascntresetthe{remark}

\crefname{theorem}{theorem}{theorems}
\Crefname{theorem}{Theorem}{Theorems}
\crefname{proposition}{proposition}{propositions}
\Crefname{proposition}{Proposition}{Propositions}
\crefname{lemma}{lemma}{lemmas}
\Crefname{lemma}{Lemma}{Lemmas}
\crefname{corollary}{corollary}{corollaries}
\Crefname{corollary}{Corollary}{Corollaries}
\crefname{definition}{definition}{definitions}
\Crefname{definition}{Definition}{Definitions}
\crefname{remark}{remark}{remarks}
\Crefname{remark}{Remark}{Remarks}

\newcommand{\R}{\mathbb R}
\newcommand{\mult}{\mathrm{mult}}

\title{The Geometry of Real Anisotropic Bohnenblust--Hille Constants}

\author[D. N\'u\~nez-Alarc\'on]{Daniel N\'u\~nez-Alarc\'on}
\address{Department of Mathematics, Universidad Nacional de Colombia, Bogot\'a, Colombia}
\email{dnuneza@unal.edu.co}

\author[D. M. Pellegrino]{Daniel M. Pellegrino}
\address{Department of Mathematics, Universidade Federal da Para\'iba, Jo\~ao Pessoa, PB, Brazil}
\email{dmpellegrino@gmail.com}

\author[J. Santos]{\textcolor{black}{Joedson Silva dos Santos}}
\address{Department of Mathematics, Universidade Federal da Para\'iba, Jo\~ao Pessoa, PB, Brazil}
\email{joedson.santos@academico.ufpb.br}

\author[D. Serrano-Rodr\'iguez]{\textcolor{black}{Diana Marcela Serrano-Rodr\'iguez}}
\address{Department of Mathematics, Universidad Nacional de Colombia, Bogot\'a, Colombia}
\email{diserranor@unal.edu.co}

\author[E. V. Teixeira]{Eduardo V. Teixeira}
\address{Department of Mathematics, Oklahoma State University, Stillwater, OK, USA}
\email{eduardo.teixeira@okstate.edu}
\date{}

\subjclass{46G25 (primary); 46B28, 46B45 (secondary)}
\keywords{Bohnenblust--Hille inequality; anisotropic mixed norms; interpolation; majorization; optimal constants}

\allowdisplaybreaks
\begin{document}

\begin{abstract}
We determine the growth scale of the optimal constants in the real anisotropic
Bohnenblust--Hille inequality.  For an exponent vector $\mathbf q^{(m)}$, write
$C_{\mathbf q^{(m)}}^{(m)}$ for its optimal constant and $d_m$ for its diameter.
These constants are superpolynomial precisely when
$m d_m/\log m\to\infty$; throughout this regime, their logarithm has the
sharp scale $m d_m$.  If also $d_m\to0$, then
$\log C_{\mathbf q^{(m)}}^{(m)}/(m d_m)$ lies asymptotically in the interval
\[
 \left[\frac{\log2}{4},\frac{2-\log2-\gamma}{4}\right],
\]
where $\gamma$ is the Euler--Mascheroni constant; this interval has width less than
$10^{-2}$.  We also solve the extremal problems at
fixed diameter and fixed total deficit.  The normalized reciprocal-deficit
profiles order the \textcolor{black}{canonically arranged} optimal constants by majorization, yield exact formulas on
a full-dimensional region, and recover
the lower coefficient $(\log2)/4$ throughout a broad class of anisotropic
regimes.\end{abstract}

\maketitle

\section{Introduction and main results}\label{sec:introduction}

The Bohnenblust--Hille inequalities originated in the study of Dirichlet series. In their solution of Bohr's problem on absolute convergence, Bohnenblust and Hille~\cite{BohnenblustHille1931} obtained polynomial and multilinear inequalities that provide dimension-free estimates for the coefficients in terms of the uniform norm. The multilinear inequality, which is the setting considered here, asserts that the coefficients of an $m$-linear form on $\ell_\infty$ satisfy an $\ell_{2m/(m+1)}$ estimate with a constant independent of the dimension.

The quantitative behavior of these constants has become an important part of the theory. Besides its intrinsic interest, good control of the Bohnenblust--Hille constants has consequences in quantum information, notably in the study of multiplayer XOR games, and related Bohnenblust--Hille inequalities have more recently appeared in Boolean and noncommutative settings, with applications to learning low-degree functions and quantum observables \cite{AraujoPellegrino2019,ArunachalamDuttEscuderoPalazuelos2025,Montanaro2012,SloteVolbergZhang2024}. In the anisotropic multilinear theory the situation is richer: the single exponent is replaced by a vector of admissible exponents, and the optimal constant may vary substantially across this region. Our purpose is to understand this variation and, in particular, how the distribution of the exponents determines the size and asymptotic growth of the optimal constants.

Let $m\ge2$, let
$\mathbb K\in\{\R,\mathbb C\}$, and write $c_0$ for the Banach space of scalar
sequences converging to zero, with its usual supremum norm.  If
$(e_j)_{j\ge1}$ is the canonical basis and $T:c_0^m\to\mathbb K$ is continuous,
set
\[
 \|T\|=\sup\bigl\{|T(x^{(1)},\ldots,x^{(m)})|:\|x^{(i)}\|_\infty\le1,\ 1\le i\le m\bigr\}.
\]
The multilinear Bohnenblust--Hille inequality asserts that
\[
 \left(\sum_{j_1,\ldots,j_m=1}^{\infty}
 |T(e_{j_1},\ldots,e_{j_m})|^{\frac{2m}{m+1}}\right)^{\frac{m+1}{2m}}
 \le B_{\mathbb K,m}^{\mult}\|T\|
\]
for every such $T$, where $B_{\mathbb K,m}^{\mult}$ denotes the optimal constant.  The isotropic exponent $2m/(m+1)$ is only one point of a larger
anisotropic family, characterized in \cite{ABPS2014}.  From now on the scalar
field is $\R$, and we write
\begin{equation}\label{eq:BH-condition}
 \mathcal B_m
 :=\left\{\mathbf q=(q_1,\ldots,q_m)\in[1,2]^m:
 \frac{1}{q_1}+\cdots+\frac{1}{q_m}\le\frac{m+1}{2}\right\}
\end{equation}
for the admissible exponent vectors.  We refer to the elements of $\mathcal B_m$ as Bohnenblust--Hille exponent vectors, and call $\mathbf q\in\mathcal B_m$ \emph{critical} when equality holds in \eqref{eq:BH-condition}.  For a finitely
supported scalar array $a=(a_{j_1,\ldots,j_m})$, its mixed norm is
\[
 \|a\|_{\ell_{q_1}(\ell_{q_2}(\cdots\ell_{q_m}))}
 :=\left(\sum_{j_1}
 \left(\sum_{j_2}\cdots
 \left(\sum_{j_m}|a_{j_1,\ldots,j_m}|^{q_m}\right)^{q_{m-1}/q_m}
 \cdots\right)^{q_1/q_2}\right)^{1/q_1}.
\]
For an arbitrary array, the same notation denotes the supremum of these
norms over all finite rectangular truncations.
By \cite[Theorem~1.1]{ABPS2014}, membership in $\mathcal B_m$ is equivalent to
the existence of a constant $C\ge1$ such that
\begin{equation}\label{eq:anisotropic-BH}
 \left\|\bigl(T(e_{j_1},\ldots,e_{j_m})\bigr)\right\|_{\ell_{q_1}(\ell_{q_2}(\cdots\ell_{q_m}))}\le C\|T\|
\end{equation}
for every continuous $m$-linear form $T:c_0^m\to\R$.  We denote the least
constant in \eqref{eq:anisotropic-BH} by $C_{\mathbf q}^{(m)}$.

The problem is not merely to decide which exponent vectors are admissible,
but to understand how the best constants vary across $\mathcal B_m$ as the
degree grows.  Two familiar points already reveal the breadth of the answer.
At the diagonal exponent $q_i=2m/(m+1)$, the constants have a sublinear
upper bound \cite[Corollary~3.3]{BayartPellegrinoSeoane2014}; at the
mixed endpoint,
\[
 C_{(1,2,\ldots,2)}^{(m)}=2^{(m-1)/2}
\]
\cite[Theorem~2.1]{PellegrinoJNT2016}.  Thus the same admissible region contains
both mild and exponential behavior.

For $\mathbf q=(q_1,\ldots,q_m)\in\mathcal B_m$, define its diameter by
\[
 d(\mathbf q):=\max_{1\le i\le m}q_i-\min_{1\le i\le m}q_i.
\]
For a sequence $(\mathbf q^{(m)})_{m\ge2}$, with $\mathbf q^{(m)}=(q_1^{(m)},\ldots,q_m^{(m)})\in\mathcal B_m$, we write
\[
 d_m:=d(\mathbf q^{(m)})
 =\max_{1\le i\le m}q_i^{(m)}-\min_{1\le i\le m}q_i^{(m)}.
\]
The diameter was introduced in this context in
\cite{PellegrinoTeixeira2018}.  It was subsequently proved that $d_m\to0$
characterizes subexponential growth, and the superpolynomial problem was
settled for $k$-mixed vectors \cite[Theorem~1.5 and
Section~5]{CostaNunezPellegrinoRaposo2026}.  Here subexponential means
$\log C_{\mathbf q^{(m)}}^{(m)}/m\to0$, or equivalently
$C_{\mathbf q^{(m)}}^{(m)}/a^m\to0$ for every $a>1$.

A positive sequence $(c_m)$ will be called \emph{superpolynomial} if
\[
 \frac{c_m}{m^A}\longrightarrow\infty
 \qquad\text{for every }A>0,
\]
equivalently, if $\log c_m/\log m\to\infty$.

Our first result identifies the precise transition.  The diameter alone gives
both the sharp superpolynomial threshold and the correct logarithmic scale
above it.  For positive sequences $(u_m)$ and $(v_m)$, we write
$u_m\asymp v_m$ if there are constants $0<c\le C<\infty$ such that
\[
c v_m\le u_m\le C v_m
\]
for all sufficiently large $m$.  Let $\gamma$ be the Euler--Mascheroni
constant, and set
\begin{equation}\label{eq:intro-vartheta-definition}
\vartheta:=\frac{2-\log2-\gamma}{2}.
\end{equation}

\begin{mainresultA}[Growth determined by the diameter]\label{thm:main-A}
Let $(\mathbf q^{(m)})_{m\ge2}$ be a sequence of Bohnenblust--Hille exponents and put $d_m=d(\mathbf q^{(m)})$.
\begin{enumerate}[label=\textup{(\roman*)},leftmargin=2.35em]
\item The exact threshold for superpolynomial growth is
\begin{equation}\label{eq:intro-superpoly}
 \frac{\bigl(\log C_{\mathbf q^{(m)}}^{(m)}\bigr)}{\log m}\longrightarrow\infty
 \quad\Longleftrightarrow\quad
 \frac{m d_m}{\log m}\longrightarrow\infty.
\end{equation}

\item If $m d_m/\log m\to\infty$, then
\begin{equation}\label{eq:intro-global-growth-law}
 \frac{\log2}{4}
 \le \liminf_{m\to\infty}\frac{\bigl(\log C_{\mathbf q^{(m)}}^{(m)}\bigr)}{m d_m}
 \le \limsup_{m\to\infty}\frac{\bigl(\log C_{\mathbf q^{(m)}}^{(m)}\bigr)}{m d_m}
 \le \frac{\log2}{2},
\end{equation}
and hence $\log C_{\mathbf q^{(m)}}^{(m)}\asymp m d_m$ throughout the superpolynomial region.

\item If, in addition, $d_m\to0$, then
\begin{equation}\label{eq:intro-corridor}
 \frac{\log2}{4}
 \le \liminf_{m\to\infty}\frac{\bigl(\log C_{\mathbf q^{(m)}}^{(m)}\bigr)}{m d_m}
 \le \limsup_{m\to\infty}\frac{\bigl(\log C_{\mathbf q^{(m)}}^{(m)}\bigr)}{m d_m}
 \le \frac{\vartheta}{2}.
\end{equation}
\end{enumerate}
\end{mainresultA}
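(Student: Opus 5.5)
The plan is to prove matching lower and upper bounds for $\log C_{\mathbf q}^{(m)}$ in terms of $m\,d(\mathbf q)$, and then read off (i)--(iii).

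\emph{Lower bound.} The mixed norm is monotone in each exponent, so lowering one $q_i$ can only increase the constant. Let $i_0$ be an index with $q_{i_0}=\min_i q_i=:q_{\min}$. Test \eqref{eq:anisotropic-BH} on products of Walsh--Hadamard (Kahane--Salem--Zygmund-type) forms, built by tensoring $2\times 2$ blocks $x_1y_1+x_1y_2+x_2y_1-x_2y_2$. Each block has norm $2$ and four unimodular coefficients. Pair coordinates so that every block pairs a slot of small exponent with a slot of exponent $q_{\max}$. A block in slots with exponents $(a,b)$ then contributes the factor $2^{1/a+1/b-1}$ to the ratio of mixed norm to sup norm.

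The admissibility constraint $\sum 1/q_i\le (m+1)/2$ is used only to locate the vector. Since $\max_i q_i-\min_i q_i=d$ and all $q_i\le 2$, at least the single coordinate $q_{\min}\le 2-d$ is available, and convexity of $1/q$ lets us bound sums of reciprocal deficits. The goal is an estimate of the form
\[
\log C_{\mathbf q}^{(m)}\ \ge\ \tfrac{\log 2}{4}\,m d-O(\log m),
\]
where the factor $1/4$ comes from the derivative of $1/q$ at $q=2$ (namely $1/4$) combined with the $2$-block pairing.

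I expect this step to be the main obstacle. Naively only one coordinate is small, which gives just $d\log 2/4$. To reach order $m d$, the plan is to use the criticality slack: if one coordinate is small, the constraint $\sum 1/q_i\le (m+1)/2$ forces the remaining coordinates to sit near $2m/(m+1)$ rather than at $2$. Using the known polynomial-type lower bounds on $\ell_\infty^n$ for those coordinates, together with the mixed $(1,2,\dots,2)$ computation $2^{(m-1)/2}$ rescaled by interpolation, should yield the $m d$ scale. A polynomial lower bound $C\ge c\,m^{\varepsilon}$ covers the $O(\log m)$ error term.

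\emph{Upper bound.} Interpolate between the mixed endpoints $(1,2,\dots,2)$ and its permutations, whose constants are $2^{(m-1)/2}$, and isotropic-type vectors, whose constants are polynomial by \cite{BayartPellegrinoSeoane2014}. Write $\mathbf q$ (after monotonicity reductions, raising exponents to a critical vector with the same diameter) as a Riesz--Thorin/Minkowski interpolation of such vectors. The weights of the $(1,2,\dots,2)$-type vectors should sum to about $m d/\dots$, which gives
\[
\log C\le \tfrac{\log 2}{2}\,m d+O(\log m)
\]
in general. When $d\to 0$, a finer interpolation through the Hardy--Littlewood/Bayart--Pellegrino--Seoane vectors, with constants involving the Gamma-function factors $\prod \Gamma(\cdot)$ (hence $\gamma$), improves the coefficient to $\vartheta/2$. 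The constant arises from the asymptotics $\sum_k \log(\Gamma\text{-ratio}) \sim$ a harmonic-sum correction, i.e.\ $\sum 1/k-\log m\to\gamma$.

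\emph{Conclusion.} With $\tfrac{\log2}{4}md-O(\log m)\le\log C\le \tfrac{\log2}{2}md+O(\log m)$ in hand, part (ii) follows by dividing by $md_m\to\infty$ relative to $\log m$. Part (iii) follows from the refined upper estimate in the same way. For part (i), one direction is immediate from (ii). For the converse, if $md_m/\log m\not\to\infty$, the upper bound gives $\log C\le O(\log m)$ along a subsequence, so $C$ is not superpolynomial.
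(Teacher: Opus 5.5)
Your architecture (a two-sided bound with $O(\log m)$ errors, then division by $md_m$) matches the paper's. However, neither bound is actually established, and both mechanisms you propose would fail.

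\emph{Lower bound.} Your belief that one small coordinate contributes only $O(d)$ to $\log C_{\mathbf q}^{(m)}$ is wrong. The proposed fix also rests on a false claim. Admissibility bounds $\sum_i 1/q_i$ from above, so if anything it pushes the remaining coordinates toward $2$, not toward $2m/(m+1)$. Indeed $(2-d,2,\ldots,2)$ is admissible for $d\le1$, and its constant is exactly $2^{(m-1)d/(2(2-d))}$. So the full $md$ scale comes from a single minimal coordinate interacting with \emph{every} other variable. A tensor product of disjoint $2\times2$ blocks cannot detect this, since only one block touches the small slot.

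The paper instead uses the universal estimate \eqref{eq:universal-lower}. Since the other $q_i\le2$, it gives $C_{\mathbf q}^{(m)}\ge 2^{(m-1)(1/q_{\min}-1/2)}$, and then $q_{\min}\le 2-d$. The underlying mechanism is recursive duplication at the minimal coordinate, $T=(y_1+y_2)S(\ldots,x',\ldots)+(y_1-y_2)S(\ldots,x'',\ldots)$. Each appended variable of exponent $2$ doubles the norm and multiplies the mixed norm by $2^{1/2+1/q_{\min}}$, a net factor $2^{1/q_{\min}-1/2}$. Note also that interpolation only yields upper bounds, so it cannot ``rescale'' the $(1,2,\ldots,2)$ computation into a lower bound. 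Polynomial lower bounds cannot produce growth $e^{cmd}$ either.

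\emph{Upper bound.} Interpolating between permutations of $(1,2,\ldots,2)$, the point $(2,\ldots,2)$ and the $m$-linear diagonal vector fails exactly when $d\to0$. Take the critical vectors $(p_k,\ldots,p_k,2,\ldots,2)$ with $k\to\infty$ and $k=o(m)$. Their diameter is $2/(k+1)$ and their reciprocal deficit is $k^{-1}\mathbf1_{\{1,\ldots,k\}}$. Since some coordinate has zero deficit, the diagonal vector must get weight $0$ and the $(1,2,\ldots,2)$-type vectors carry total weight $1$. You then only get $\log C\le (m-1)(\log2)/2$, which exceeds $md\approx 2m/k$ by a factor of order $k$. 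You cannot add these flat vectors as ``isotropic-type'' vectors with polynomial constants either, since their constants are at least $2^{(m-k)/(2k)}$.

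The missing ingredient is the Khinchine extension \eqref{eq:Khinchine-interpolation}: if $\min_i q_i\ge p_k$, then $C_{\mathbf q}^{(m)}\le A_{p_k}^{-(m-k)}B_{\R,k}^{\mult}$. This is a $k$-linear Bohnenblust--Hille inequality followed by lower Khinchine in the other $m-k$ variables. It is combined with the fact that admissibility forces $q_{\max}\ge 2m/(m+1)$, hence $q_{\min}\ge 2-d-2/(m+1)$, so one may take $k\approx 2/d$. Then $A_{p_k}^{-1}\le 2^{1/(k+2)}$ yields the coefficient $(\log2)/2$, and $-k\log A_{p_k}\to\vartheta$ yields $\vartheta/2$. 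The constant $\gamma$ enters through Haagerup's formula and $\psi(3/2)=2-\gamma-2\log2$, that is, through one Khinchine constant raised to the power $m-k$, not through a harmonic sum.

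When $d_m$ does not tend to $0$ (the case $d_m+2/(m+1)>2/3$), even this does not give $(\log2)/2$. There the paper uses the two-point majorant and the exact formula \eqref{eq:intro-exact-dominant-profile}. The converse half of (i) needs this same upper bound with an $O(\log m)$ error, so it inherits the gap. Also, to bound $C_{\mathbf q}^{(m)}$ from above you must lower, not raise, exponents when passing to a critical vector.
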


The theorem places the critical diameter scale at $(\log m)/m$.  Below this
scale one cannot have superpolynomial growth; above it, the logarithm of the
optimal constant is comparable to $m d_m$.  The global upper coefficient
$(\log2)/2$ is sharp, as the endpoint $(1,2,\ldots,2)$ shows.

\begin{mainresultB}\label{thm:main-B-fixed}
For every $m\ge2$ and $0\le d\le1$,
\begin{equation}\label{eq:intro-fixed-diameter-envelope}
 \min_{\substack{\mathbf q\in\mathcal B_m\\ d(\mathbf q)=d}}
 C_{\mathbf q}^{(m)}
 =2^{\frac{(m-1)d}{2(2-d)}}.
\end{equation}
The minimizers are precisely the permutations of $(2-d,2,\ldots,2)$. Moreover, the lower coefficient $(\log2)/4$ in Theorem~\ref{thm:main-A}(iii) is optimal: if $d_m>0$, $d_m\to0$, and $m d_m/\log m\to\infty$, then for $\mathbf q^{(m)}=(2-d_m,2,\ldots,2)$,
\[
 \frac{\log C_{\mathbf q^{(m)}}^{(m)}}{m d_m}\longrightarrow\frac{\log2}{4}.
\]
\end{mainresultB}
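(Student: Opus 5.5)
The plan is to prove matching upper and lower bounds for $(2-d,2,\ldots,2)$ by interpolation and an explicit test form, reduce a general $\mathbf q$ to this vector by monotonicity of mixed norms, and then settle the equality cases and the asymptotic statement.

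\textbf{Upper bound by interpolation.} First I would compute $C^{(m)}_{(2-d,2,\ldots,2)}$ exactly, using two anchor points.
\begin{itemize}
\item At $(1,2,\ldots,2)$, \cite[Theorem~2.1]{PellegrinoJNT2016} gives the value $2^{(m-1)/2}$.
\item At $(2,\ldots,2)$, which lies in $\mathcal B_m$, the constant is $1$. In the real case this follows by iterating Rademacher orthogonality: $\sum_j\mathbb E|\sum_i\varepsilon_i a_{ij}|^2=\sum_{i,j}|a_{ij}|^2$, and for fixed signs $\sum_j|b_j|^2\le(\sum_j|b_j|)^2\le\|T\|^2$. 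Testing with a single coefficient shows the constant equals $1$.
\end{itemize}
Complex interpolation of the mixed sequence spaces (Riesz--Thorin in the first slot, with the remaining slots fixed at $\ell_2$) applies. Since $\frac1{2-d}=\frac\theta1+\frac{1-\theta}2$ gives $\theta=\frac d{2-d}$, we get
\[
 C^{(m)}_{(2-d,2,\ldots,2)}\le 2^{\frac{(m-1)d}{2(2-d)}}.
\]
For the reverse inequality I would use the standard extremal forms for the mixed $(1,2,\ldots,2)$ inequality. These are built recursively as $T_m=(x^{(m)}_1+x^{(m)}_2)T_{m-1}+(x^{(m)}_1-x^{(m)}_2)T'_{m-1}$, with $\pm1$ coefficients, an explicit norm, and a flat structure. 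Their $\ell_{2-d}(\ell_2(\cdots))$ norm can be computed exactly, and the ratio to $\|T_m\|$ should give exactly $2^{(m-1)d/(2(2-d))}$. By permuting variables of the test form, the same lower bound holds when $2-d$ sits in any position, so every permutation attains the value. This needs a little care, since mixed norms are not symmetric under permutation.

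\textbf{Reduction of a general $\mathbf q$.} If $d(\mathbf q)=d$, then $\max q_i\le2$ forces $\min q_i\le 2-d$. The inclusions $\ell_p\hookrightarrow\ell_r$ for $p\le r$ hold with norm $1$ in every slot of the mixed norm. Hence $C_{\mathbf q}^{(m)}\ge C_{\mathbf q'}^{(m)}$, where $\mathbf q'$ keeps the minimizing coordinate (replaced by $2-d$) in its position and raises all others to $2$. This proves the minimum formula \eqref{eq:intro-fixed-diameter-envelope}.

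\textbf{Characterizing the minimizers.} This is the step I expect to be the main obstacle. One must show strict inequality whenever $\mathbf q$ has a second coordinate below $2$, or has $\min q_i<2-d$. My first attempt would be to evaluate the same explicit extremal forms directly at $\mathbf q$. Because they have flat $\pm1$ coefficients on full blocks of size $2^{\cdot}$, their mixed norms are explicit products of powers of $2$. Lowering any exponent below $2$ in a slot with more than one nonzero entry should strictly increase the norm by a factor $2^{c(1/q_i-1/2)}$ with $c>0$. If these forms do not spread in every variable, I would tensor them with a small bilinear piece in that slot to force strictness.

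\textbf{Asymptotic statement.} Finally, plugging $d=d_m\to0$ into the exact formula gives
\[
 \frac{\log C^{(m)}_{\mathbf q^{(m)}}}{m d_m}=\frac{(m-1)\log 2}{2m(2-d_m)}\longrightarrow\frac{\log2}{4}.
\]
This shows that the lower coefficient in Theorem~\ref{thm:main-A}(iii) is attained. The condition $md_m/\log m\to\infty$ is needed only to place the sequence in the superpolynomial regime.
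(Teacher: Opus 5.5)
Your outline is, in substance, the paper's proof. The upper bound is the same interpolation between $(2,\ldots,2)$ and $(1,2,\ldots,2)$. Your test forms, evaluated directly at $\mathbf q$, reproduce the universal estimate $C_{\mathbf q}^{(m)}\ge 2^{1-m+\sum_i 1/q_i+(m-2)/q_{\min}}$, which the paper imports from \cite[Lemma~1.6]{CostaNunezPellegrinoRaposo2026}. The paper gets the minimum from this estimate together with $q_{\min}\le 2-d$, which is equivalent to your monotonicity reduction. It gets the minimizers from the equality case $\sum_{i\ne i_0}1/q_i=(m-1)/2$, which is exactly your strictness dichotomy. Two steps, however, are not right as written.

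\emph{The choice of test form.} The recursion must split only the exceptional variable, as in Lemma~\ref{lem:duplication}: start from $T_1(x)=x_1$ and set $T_m=(y_1+y_2)T_{m-1}(x',\ldots)+(y_1-y_2)T_{m-1}(x'',\ldots)$. Then $\|T_m\|=2^{m-1}$ and $|\widehat T_m|$ is the indicator of the full box $[2^{m-1}]\times[2]^{m-1}$. After moving the long variable to the slot $i_0$ where $q_{i_0}=q_{\min}$, the mixed norm in any order equals $2^{(m-1)/q_{i_0}+\sum_{j\ne i_0}1/q_j}$. This gives the lower bound in every position and strictness in both cases of your dichotomy at once, so no extra bilinear tensoring is needed.

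If you instead shift all variables at each step, as in \cite{DinizMunozPellegrinoSeoane2014}, the support is block-diagonal and the bound can be lost. For $m=3$ with the exponent $p$ in the last slot, every placement of the variables gives only $2^{1/p-1/2}$ instead of $2^{2(1/p-1/2)}$. So ``a little care'' here means committing to the box form.

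\emph{The upper bound for permutations.} The claim that every permutation attains the value does not follow from a lower bound. You also need $C_{\mathbf t}^{(m)}\le 2^{(m-1)d/(2(2-d))}$ for each permutation $\mathbf t$ of $(2-d,2,\ldots,2)$, and your interpolation only uses the anchor with $1$ in the first slot, which is what \cite[Theorem~2.1]{PellegrinoJNT2016} covers. The paper supplies this step with Proposition~\ref{prop:known-mixed-norm-tools}(2). By Minkowski's inequality, moving the smaller exponent to the outer sum does not decrease the mixed norm, and permuting the variables of $T$ preserves $\|T\|$. Hence $C_{\mathbf t}^{(m)}\le C_{(2-d,2,\ldots,2)}^{(m)}$, and combined with your lower bound every permutation is a minimizer. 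The rest of your argument, including the asymptotic statement, goes through.
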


Theorem~\ref{thm:main-B-fixed} identifies the smallest optimal constant among
exponent vectors with prescribed diameter.  The diameter does not determine
the optimal constant: for $0<d\le2/3$, consider
\[
 \mathbf q_m^{(1)}(d)=(2-d,2,\ldots,2),\qquad
 \mathbf q_m^{(2)}(d)=(2-d,2-d,2,\ldots,2).
\]
Both vectors are admissible and have diameter $d$.  Theorem~\ref{thm:main-B-fixed}
gives the exact constant for the first family, while Theorem~\ref{thm:main-D}
below gives the exact constant for the second.  Thus, with $t=d/(2-d)$,
\[
 C_{\mathbf q_m^{(1)}(d)}^{(m)}=2^{t(m-1)/2},
 \qquad
 C_{\mathbf q_m^{(2)}(d)}^{(m)}=2^{tm/2}.
\]
The discrepancy comes from information that the diameter discards: how the
reciprocal deficit is distributed among the coordinates.  

Define the reciprocal-deficit coordinates
\[
 x:\mathcal B_m\longrightarrow[0,1]^m,
 \qquad
 x(\mathbf q):=(x_1(\mathbf q),\ldots,x_m(\mathbf q)),
\]
by
\begin{equation}\label{eq:intro-deficit}
 x_i(\mathbf q):=2\left(\frac{1}{q_i}-\frac{1}{2}\right),
 \qquad 1\le i\le m,
\end{equation}
and define the total reciprocal deficit
\[
 \sigma:\mathcal B_m\longrightarrow[0,1],
 \qquad
 \sigma(\mathbf q):=\sum_{i=1}^m x_i(\mathbf q).
\]
The condition $\sigma(\mathbf q)\le1$ is exactly the admissibility condition
in \eqref{eq:BH-condition}.  Thus $\sigma$ measures the total distance from
the Hilbertian point in reciprocal coordinates.  Let
\[
 \Delta_{m-1}:=\left\{z\in[0,\infty)^m:\sum_{i=1}^m z_i=1\right\}.
\]
For $\mathbf q\ne(2,\ldots,2)$, normalize this deficit by setting
\begin{equation}\label{eq:intro-normalized-profile}
 y:\mathcal B_m\setminus\{(2,\ldots,2)\}\longrightarrow\Delta_{m-1},
 \qquad
 y(\mathbf q):=\frac{x(\mathbf q)}{\sigma(\mathbf q)}.
\end{equation}
We write $[m]:=\{1,\ldots,m\}$ and, for $S\subset[m]$, let $\mathbf1_S\in\{0,1\}^m$ denote its indicator vector.
For $z\in\Delta_{m-1}$, write $z^\downarrow=(z_1^\downarrow,\ldots,z_m^\downarrow)$ for the nonincreasing rearrangement of its coordinates. For $1\le r\le m$, define the ordered partial-sum map
\[
 X_r:\Delta_{m-1}\longrightarrow[0,1],
 \qquad
 X_r(z):=\sum_{i=1}^r z_i^\downarrow.
\]
For $\mathbf q\ne(2,\ldots,2)$, set
\begin{equation}\label{eq:intro-ordered-sums}
 Y_r(\mathbf q):=X_r(y(\mathbf q))
 =\sum_{i=1}^r y_i^\downarrow(\mathbf q),
 \qquad 1\le r\le m.
\end{equation}
Thus $Y_1(\mathbf q)$ is the largest share of the total deficit carried by a
single coordinate.  We use the standard majorization order on
$\Delta_{m-1}$: for $z,w\in\Delta_{m-1}$,
\[
 z\prec w
 \quad\Longleftrightarrow\quad
 X_r(z)\le X_r(w)\quad(1\le r<m).
\]
The equality at $r=m$ is automatic.

Fix $0<s\le1$.  To compare profiles on the slice $\sigma=s$, define
\[
 \mathbf Q_m:(0,1]\times\Delta_{m-1}\longrightarrow\mathcal B_m
\]
by
\begin{equation}\label{eq:intro-profile-reconstruction}
 \mathbf Q_m(s,z):=(Q_1(s,z),\ldots,Q_m(s,z)),
 \qquad
 \frac{1}{Q_i(s,z)}=\frac{1}{2}+\frac{s z_i}{2},
 \quad 1\le i\le m.
\end{equation}
Define
\[
 \mathfrak C_{m,s}:\Delta_{m-1}\longrightarrow[1,\infty)
\]
by
\begin{equation}\label{eq:intro-canonical-profile-constant}
 \mathfrak C_{m,s}(z)
 \textcolor{black}{:=\max_{\pi\in\mathfrak S_m}
 C_{\mathbf Q_m(s,\pi z)}^{(m)}.}
\end{equation}
\textcolor{black}{The maximum accounts for the order remembered by the iterated mixed norm.}

\begin{mainresultC}[Profile majorization]
\label{thm:main-C}
For every $m\ge2$ and $0<s\le1$, the function
$z\mapsto\log\mathfrak C_{m,s}(z)$ is symmetric, convex, and Schur-convex on
$\Delta_{m-1}$.  In particular,
\begin{equation}\label{eq:intro-canonical-order}
 \mathfrak C_{m,s}(z)
 =C_{\mathbf Q_m(s,z^\downarrow)}^{(m)}.
\end{equation}
Consequently,
\begin{equation}\label{eq:intro-actual-majorization}
 z\prec w
 \quad\Longrightarrow\quad
 \mathfrak C_{m,s}(z)\le\mathfrak C_{m,s}(w).
\end{equation}
In particular,
\begin{equation}\label{eq:intro-profile-extremes}
 C_{(2m/(m+s),\ldots,2m/(m+s))}^{(m)}
 \le \mathfrak C_{m,s}(z)
 \le 2^{(m-1)s/2}.
\end{equation}
\end{mainresultC}

\begin{mainresultD}[Exact profile formulas]\label{thm:main-D}
Let $m\ge2$.
\begin{enumerate}[label=\textup{(\roman*)},leftmargin=2.35em]
\item For every $0\le s\le1$,
\begin{equation}\label{eq:intro-total-deficit-envelope}
 \max_{\substack{\mathbf q\in\mathcal B_m\\ \sigma(\mathbf q)=s}}
 C_{\mathbf q}^{(m)}
 =2^{(m-1)s/2}.
\end{equation}
The maximum is attained by every permutation of
\[
 \mathbf q_s=\left(\frac{2}{1+s},2,\ldots,2\right).
\]

\item For every Bohnenblust--Hille exponent $\mathbf q$ with $\sigma(\mathbf q)>0$ and $Y_1(\mathbf q)\ge1/2$,
\begin{equation}\label{eq:intro-exact-dominant-profile}
 C_{\mathbf q}^{(m)}
 =2^{\frac{\sigma(\mathbf q)}{2}\left(1+(m-2)Y_1(\mathbf q)\right)}.
\end{equation}
\end{enumerate}
\end{mainresultD}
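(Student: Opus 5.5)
The plan is to work throughout in the reciprocal-deficit coordinates $x(\mathbf q)$. In these coordinates, interpolation of mixed $\ell_{q}$-norms is affine, so $\mathbf q\mapsto\log C^{(m)}_{\mathbf q}$ should be a convex function of $x(\mathbf q)$. This is the same convexity that underlies the convexity part of Theorem~\ref{thm:main-C}. I will establish it by multilinear complex interpolation of the coefficient map $T\mapsto (T(e_{j_1},\ldots,e_{j_m}))$, from $\mathcal L(^m c_0)$ into the mixed spaces $\ell_{q_1}(\cdots\ell_{q_m})$, whose reciprocal exponents interpolate linearly (Benedek--Panzone). The upper bounds then reduce to evaluating the constants at a few vertices. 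The lower bounds will come from explicit test forms whose coefficient arrays are $\pm1$ on a product set. For such arrays the mixed norm is a product of powers of $2$, and so $\log_2$ of the quotient of the mixed norm by $\|T\|$ is an affine function of $x$.

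\textbf{Part (i).} The first step is the vertex values. At $x=0$ we have $C_{(2,\ldots,2)}^{(m)}=1$. This follows by averaging $|T(\varepsilon^{(1)},\ldots,\varepsilon^{(m)})|^2$ over independent Rademacher signs, which yields exactly the $\ell_2$-norm of the coefficients. At $x=e_k$, i.e.\ for $q_k=1$ and all other $q_i=2$, I expect the constant to be $2^{(m-1)/2}$ for every position $k$. For $k=1$ this is \cite[Theorem~2.1]{PellegrinoJNT2016}. For $k>1$ I would run the same argument: freeze the outer variables, apply the $k=1$ estimate in the inner block, and use Minkowski's inequality to move the $\ell_1$-sum outward, since $\ell_1(\ell_2)$ dominates $\ell_2(\ell_1)$ in the appropriate sense.

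Now suppose $\sigma(\mathbf q)=s$. Then
\[
x=(1-s)\cdot0+\sum_k s\,y_k\,e_k
\]
is a convex combination of these vertices. Log-convexity therefore gives $C_{\mathbf q}^{(m)}\le 2^{(m-1)s/2}$. For the matching lower bound at $\mathbf q_s$, I would use the recursive Kahane--Salem-type forms
\[
T_m=(x^{(m)}_1+x^{(m)}_2)\,T_{m-1}+(x^{(m)}_1-x^{(m)}_2)\,T'_{m-1},
\]
which have $\pm1$ coefficients and norm $2^{(m-1)/2}\cdot(\text{normalisation})$. Their mixed norm at $\mathbf q_s$ is then computed directly.

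\textbf{Part (ii).} Let $k$ be a coordinate carrying the maximal share, so that $Y_1\ge 1/2$ means $x_k\ge\sum_{i\ne k}x_i$. The target exponent is
\[
\tfrac12\Bigl(\sum_i x_i+(m-2)x_k\Bigr),
\]
which is affine in $x$ on the polytope $P_k=\{x_k\ge\sum_{i\ne k}x_i,\ \sum_i x_i\le1\}$. The vertices of $P_k$ are $0$, $e_k$, and $\tfrac12(e_k+e_l)$ for $l\ne k$. At the first two the formula gives $1$ and $2^{(m-1)/2}$, which were established in part (i). So, by log-convexity, the upper bound reduces to showing
\[
C^{(m)}_{\mathbf q}\le 2^{m/2}\quad\text{for } \mathbf q \text{ with } q_k=q_l=4/3 \text{ and all other } q_i=2,
\]
in every pair of positions $k,l$. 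I would prove this as a bilinear-in-two-blocks estimate. First apply the sharp $\ell_{4/3}$ Littlewood inequality (constant $\sqrt2$) in the variables $k,l$ together with the exact $L_2$-Khinchin averaging in the other $m-2$ variables. Then use Minkowski's inequality to reorder the mixed norm, accumulating a factor $\sqrt2$ for each of the remaining variables to reach $2^{m/2}$. For the lower bound, a single $\pm1$ test array (a tensor product of $2\times2$ Hadamard blocks, arranged so that the $k$-th coordinate is the lone $\ell_1$-type direction) should attain equality at all three types of vertex simultaneously. Because the log of its quotient is affine in $x$, it then attains equality on all of $P_k$. This matches the affine upper bound and gives \eqref{eq:intro-exact-dominant-profile}.

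\textbf{Main obstacle.} I expect the hardest step to be the exact vertex value $2^{m/2}$ at $(4/3,4/3,2,\ldots,2)$ for arbitrary positions of the two $4/3$'s, together with the $e_k$ vertex for arbitrary $k$. The iterated mixed norm is order-sensitive, and the Minkowski reorderings must not lose constants. If the direct approach loses a constant, I would first try an interpolation argument: write the position-permuted vector as an interpolant of configurations whose constants are already known exactly, and then close the gap with the test form.
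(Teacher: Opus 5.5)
\textbf{Gap in part (ii).} Your part (i) is sound in outline. Convexity of $\log C$ in $x$ is simply iterated H\"older applied to a fixed array, so complex interpolation is not needed, and the vertex values are $1$ and at most $2^{(m-1)/2}$. Part (ii), however, breaks at the vertex $x=\tfrac12(e_k+e_l)$, i.e.\ $q_k=q_l=4/3$ with all other exponents $2$.
\begin{itemize}
\item There $\sigma=1$ and $Y_1=1/2$, so the formula you are proving gives $2^{\frac12(1+\frac{m-2}{2})}=2^{m/4}$, not $2^{m/2}$. Already for $m=2$ the constant at $(4/3,4/3)$ is $\sqrt2$, not $2$.
\item With the value $2^{m/2}$ the convexity step does not close. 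Decomposing $x=(1-\sigma)\cdot0+\bigl(x_k-\sum_{l\ne k}x_l\bigr)e_k+\sum_{l\ne k}2x_l\cdot\tfrac12(e_k+e_l)$ yields only $\log_2C_{\mathbf q}^{(m)}\le\frac{m+1}{2}\sigma-x_k$. This exceeds the target $\frac{\sigma}{2}+\frac{m-2}{2}x_k$ by $\frac m2(\sigma-x_k)>0$ as soon as a second coordinate is non-Hilbertian.
\item Your own product test form gives exactly $2^{m/4}$ at this vertex, so it cannot ``attain equality'' with $2^{m/2}$.
\item The mechanism you sketch for this vertex also fails. Exact $L_2$-Khinchine averaging in the $\ell_2$-variables cannot be combined with the outer $\ell_{4/3}$-sums. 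Minkowski's inequality lets you move an $L_p(dt)$-norm outside an $\ell_{4/3}$-sum only when $p\le4/3$. Also, $\sqrt2$ per extra variable is the wrong cost.
\end{itemize}

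\textbf{The repair.} First, the directed rearrangement moves both $4/3$'s outward without decreasing the constant, so $C_{\mathbf q}^{(m)}\le C^{(m)}_{(4/3,4/3,2,\ldots,2)}$. Then apply the sharp real Littlewood constant $C^{(2)}_{(4/3,4/3)}=\sqrt2$. Follow this with $m-2$ one-coordinate Khinchine extensions at $p=4/3$, each costing $A_{4/3}^{-1}=2^{1/4}$ by Haagerup (since $4/3<p_{\mathrm H}$). This gives $2^{1/2+(m-2)/4}=2^{m/4}$.

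With this vertex value, your decomposition of $P_k$ yields exactly $\frac{\sigma}{2}+\frac{m-2}{2}x_k$. Your $\pm1$ form on $[2^{m-1}]\times[2]^{m-1}$ (large block in coordinate $k$, norm $2^{m-1}$) supplies the matching lower bound on all of $P_k$. It is precisely the form behind the universal lower estimate the paper quotes.

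\textbf{Comparison with the paper.} Once repaired, your route is a genuine and somewhat leaner alternative. The paper majorizes $y(\mathbf q)\prec(a,1-a,0,\ldots,0)$ with $a=Y_1$, applies Rado's theorem, and uses the exact two-coordinate formula for every $a\in[1/2,1]$. That in turn needs the sharp bilinear formula at all critical pairs together with the exact extension. Your polytope $P_k$ needs only the endpoints $a=1$ and $a=1/2$.

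\textbf{A caution for part (i).} Take $T'_{m-1}$ to be $T_{m-1}$ with only the special variable moved to a disjoint block, so the support stays a product set. The DMPS-type recursion, which shifts all previous variables, has non-product support. It gives only $\sqrt2$ at $\mathbf q=(2,2,1)$, where the constant is $2$, so it would not cover every permutation of $\mathbf q_s$.
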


For $m=3$, the region $Y_1\ge1/2$ occupies three quarters of the profile simplex $\Delta_2$. Thus Theorem~\ref{thm:main-D}(ii) determines the optimal anisotropic Bohnenblust--Hille constant on $75\%$ of the profile region.

The second assertion depends on the full profile: on the region
$Y_1\ge1/2$, all remaining coordinates may be averaged by majorization
without changing the sharp constant.  This region has nonempty relative
interior in $\Delta_{m-1}$, so the formula is not confined to a lower-dimensional
boundary.  The first assertion points in the opposite direction: at fixed
total deficit, the most concentrated profile is extremal.

Let $(r_j)_{j\ge1}$ denote the Rademacher system on $[0,1]$. For $1\le p\le2$, let $A_p$ denote the optimal lower real Khinchine constant, that is, the largest constant such that, for every integer $N\ge1$ and every real scalar sequence $(a_j)_{j=1}^N$,
\begin{equation}\label{eq:intro-Khinchine-definition}
 A_p\left(\sum_{j=1}^N |a_j|^2\right)^{1/2}
 \le
 \left(\int_0^1\left|\sum_{j=1}^N a_j r_j(t)\right|^p\,dt\right)^{1/p}.
\end{equation}

For every integer $k\ge1$, define
\begin{equation}\label{eq:intro-pk-definition}
 p_k:=\frac{2k}{k+1}.
\end{equation}
If
\begin{equation}\label{eq:intro-flat-endpoint-constant}
 C_{m,k}:=C_{(\underbrace{p_k,\ldots,p_k}_{k},
                    \underbrace{2,\ldots,2}_{m-k})}^{(m)},
 \qquad 1\le k\le m,
\end{equation}
then \textcolor{black}{Theorem}~\ref{thm:main-C}, applied to the profiles
$k^{-1}\mathbf1_{\{1,\ldots,k\}}$, gives the monotone chain
\begin{equation}\label{eq:intro-flat-monotonicity}
 B_{\R,m}^{\mult}=C_{m,m}\le C_{m,m-1}\le\cdots\le C_{m,1}
 =2^{(m-1)/2}.
\end{equation}
We also denote by $p_{\mathrm H}$ Haagerup's breakpoint,
\begin{equation}\label{eq:intro-pH-definition}
 p_{\mathrm H}=1.847416\ldots,
 \qquad
 \Gamma\!\left(\frac{p_{\mathrm H}+1}{2}\right)=\frac{\sqrt\pi}{2}.
\end{equation}

The remaining gap in Theorem~\ref{thm:main-A}(iii) is governed by the sharp
Khinchine constants.  Since $p_{12}=24/13<p_{\mathrm H}<13/7=p_{13}$, the lower and upper extension factors agree through level $12$.  Beyond that point
their difference is small and can be measured exactly.  Decomposing the
profile over nested level sets gives

\begin{mainresultE}[Sharp asymptotics from the deficit tail]\label{thm:main-E}
Let $(\mathbf q^{(m)})_{m\ge2}$ be Bohnenblust--Hille exponents and put
$d_m=d(\mathbf q^{(m)})$. Assume
\[
 d_m\to0,
 \qquad
 \frac{m d_m}{\log m}\to\infty.
\]
Then
\begin{equation}\label{eq:intro-main-defect-tail-bound}
 \limsup_{m\to\infty}
 \frac{\bigl(\log C_{\mathbf q^{(m)}}^{(m)}\bigr)}{m d_m}
 \le
 \frac{\log2}{4}
 +\left(\frac{\vartheta}{2}-\frac{\log2}{4}\right)
 \limsup_{m\to\infty}
 \frac{y_{13}^\downarrow(\mathbf q^{(m)})}{Y_1(\mathbf q^{(m)})}.
\end{equation}
\end{mainresultE}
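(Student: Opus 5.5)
The plan is to decompose the ordered profile into flat profiles along its level sets, use the convexity part of Theorem~\ref{thm:main-C}, and reduce everything to a per-level bound on the flat constants $C_{m,r}$ of \eqref{eq:intro-flat-endpoint-constant}. Write $\mathbf q=\mathbf q^{(m)}$, $s=\sigma(\mathbf q)$, $y=y(\mathbf q)^\downarrow$, and set $y_{m+1}:=0$. Abel summation gives the level-set decomposition
\[
 y=\sum_{r=1}^{m}\lambda_r\,\tfrac1r\mathbf 1_{[r]},\qquad \lambda_r:=r\,(y_r-y_{r+1})\ge0,\qquad \sum_r\lambda_r=1 .
\]
By \eqref{eq:intro-canonical-order} we have $C_{\mathbf q}^{(m)}\le\mathfrak C_{m,s}(y)$. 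By the convexity of $\log\mathfrak C_{m,s}$ in Theorem~\ref{thm:main-C},
\[
 \log C_{\mathbf q}^{(m)}\le\sum_r\lambda_r\log\mathfrak C_{m,s}\bigl(\tfrac1r\mathbf 1_{[r]}\bigr).
\]

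Next, I would remove the dependence on $s$. Along the segment $t\mapsto\mathbf Q_m(t,z)$ the reciprocals $1/Q_i$ are affine in $t$. Complex-free (real) interpolation of mixed-norm spaces, which is the argument already behind Theorem~\ref{thm:main-C}, then shows that $t\mapsto\log C^{(m)}_{\mathbf Q_m(t,z)}$ is convex on $[0,1]$. Its value at $t=0$ is $\log C_{(2,\dots,2)}^{(m)}=0$. Hence
\[
 \log\mathfrak C_{m,s}\bigl(\tfrac1r\mathbf 1_{[r]}\bigr)\le s\log C_{m,r},
\]
and therefore
\[
 \log C_{\mathbf q}^{(m)}\le s\sum_r\lambda_r\log C_{m,r}.
\]

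The core estimate, which I expect to be the main obstacle, is the per-level bound
\[
 \log C_{m,r}\le \frac{m}{r}\,c_r+O(\log m),\qquad c_r=\tfrac{\log2}{2}\ (r\le12),\quad c_r=\vartheta\ (r\ge13),
\]
uniformly in $r$. I would prove it by splitting the $m$ variables into roughly $m/r$ blocks and running the usual Bohnenblust--Hille/Minkowski induction. Each extension step costs a factor $A_{p_k}^{-1}$ with $p_k=2k/(k+1)$. For $p_k\le p_{\mathrm H}$, i.e.\ $k\le12$, Haagerup's value $A_p=2^{1/2-1/p}$ makes the lower and upper extension factors agree, so the block contribution is exactly $(\log2)/2$ per unit of $m/r$. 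For $k\ge13$ one uses $A_p=\sqrt2\,(\Gamma((p+1)/2)/\sqrt\pi)^{1/p}$. The asymptotic summation of $\log A_{p_k}^{-1}$ via Stirling and the digamma expansion should produce $\gamma$ and yield exactly $\vartheta=(2-\log2-\gamma)/2$; this computation and the uniformity of the $O(\log m)$ remainder are where the real work lies.

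Given that bound, telescoping gives
\[
 \sum_r\lambda_r\tfrac{c_r}{r}=\tfrac{\log2}{2}\,y_1+\bigl(\vartheta-\tfrac{\log2}{2}\bigr)y_{13}.
\]
Before applying it, I would subtract the floor $y_m$: the $r=m$ term contributes at most $s\,m y_m\cdot\log B^{\mult}_{\R,m}/1=O(s\log m)$, because $B^{\mult}_{\R,m}$ grows polynomially. Together with the remainders, this is $o(md_m)$ because $md_m/\log m\to\infty$ and $s\le1$. Since $d_m\to0$, all $q_i$ are near $2$, and $q_i=2/(1+sy_i)$ gives $d_m=2s(y_1-y_m)(1+o(1))$. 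Running the decomposition with $y-y_m\mathbf 1_{[m]}$ in place of $y$ yields
\[
 \log C\le sm\Bigl[\tfrac{\log2}{2}(y_1-y_m)+\bigl(\vartheta-\tfrac{\log2}{2}\bigr)(y_{13}-y_m)\Bigr]+o(md_m).
\]
Dividing by $md_m$ bounds the quotient by $\frac{\log2}{4}+(\frac\vartheta2-\frac{\log2}4)\frac{y_{13}-y_m}{y_1-y_m}+o(1)$. Since $\frac{y_{13}-y_m}{y_1-y_m}\le\frac{y_{13}}{y_1}$, taking $\limsup$ yields \eqref{eq:intro-main-defect-tail-bound}.
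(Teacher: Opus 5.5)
Your overall architecture matches the paper's: the level-set decomposition with $\lambda_r=r(y_r-y_{r+1})$, interpolation down to $\log C_{\mathbf q}^{(m)}\le s\sum_r\lambda_r\log C_{m,r}$, and telescoping. Your endgame is also sound. Subtracting the floor $y_m$ is a valid, slightly cleaner substitute for the paper's argument that $b_m/a_m\to0$.

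The gap is the per-level bound for $r\ge13$, which you defer and propose to obtain from an asymptotic Stirling/digamma computation. That cannot close the step. A uniform $O(\log m)$ remainder forces the \emph{non-asymptotic} inequality $r(-\log A_{p_r})\le\vartheta$ for every fixed $r\ge13$. Suppose some fixed $r$ had $r(-\log A_{p_r})=\vartheta+\eta$ with $\eta>0$. Then the flat level $\frac1r\mathbf 1_{[r]}$ would carry an excess of $\frac{m}{r}\eta$, which is linear in $m$. Concretely, for a profile equal to $\frac1{13}\mathbf 1_{[13]}$, your method yields the coefficient $13(-\log A_{p_{13}})/2$. Asymptotics alone, as in Lemma~\ref{lem:Khinchine-asymptotics}, give only $r(-\log A_{p_r})\to\vartheta$, so they cannot certify that this is at most $\vartheta/2$.

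Also, no block splitting or summation over $k$ is involved here. The constant $\vartheta$ enters as $r$ times a single per-step Khinchine cost. Summing costs over $k$ is what produces $B_{\R,k}^{\mult}\le Mk^\vartheta$, and that only feeds the $O(\log m)$ remainder.

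What you need has two parts.
\begin{enumerate}
\item Iterate the one-coordinate Khinchine extension at the fixed minimum exponent $p_r$ (Proposition~\ref{prop:known-mixed-norm-tools}(1), i.e.\ the upper bound in Proposition~\ref{prop:exact-extension}) to get $C_{m,r}\le A_{p_r}^{-(m-r)}B_{\R,r}^{\mult}$.
\item Apply the tangent-line inequality for the convex function $\log\Gamma$ at $3/2$, evaluated at $\xi=(p_r+1)/2=\frac32-\frac1{r+1}$ in Haagerup's second branch. This gives exactly $-r\log A_{p_r}\le\frac{\log2+\psi(3/2)}{2}=\vartheta$ for all $r\ge13$.
\end{enumerate}
In the paper this appears as $0\le\delta_j\le\psi(3/2)/2=\vartheta-\frac{\log2}{2}$. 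It is derived from convexity of $H(z)=\log\Gamma(\frac32-z)-\frac12\log\pi$ through the secant identity $\delta_j=(H(0)-H(z))/(2z)$. With this, your per-level bound holds uniformly in $r$ and the rest of your argument goes through as written.
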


\textcolor{black}{In the asymptotic statements involving $y_{13}^\downarrow$, the finitely many indices $m<13$ are simply omitted.}

The index $13$ is the first level beyond the exact Haagerup range; see Section~\ref{sec:proof-main-E}.  The ratio $y_{13}^\downarrow/Y_1$ measures whether a thirteenth profile mass
remains comparable to the largest one.  The lower coefficient is therefore
sharp under either of the following conditions.

\begin{corollary}\label{cor:intro-mainD-collapse}
Under the assumptions of Theorem~\ref{thm:main-E}, one has
\[
 \frac{\bigl(\log C_{\mathbf q^{(m)}}^{(m)}\bigr)}{m d_m}
 \longrightarrow\frac{\log2}{4}
\]
whenever either of the following conditions holds:
\begin{enumerate}[label=\textup{(\alph*)},leftmargin=2.35em]
\item $y_1^\downarrow(\mathbf q^{(m)})\ge1/12$ for all sufficiently large $m$;
\item $y_{13}^\downarrow(\mathbf q^{(m)})/y_1^\downarrow(\mathbf q^{(m)})\to0$.
\end{enumerate}
\end{corollary}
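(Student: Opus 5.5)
The plan is to combine the lower bound of Theorem~\ref{thm:main-A}(iii) with the upper bound of Theorem~\ref{thm:main-E}, and to show that in each case the ratio appearing in \eqref{eq:intro-main-defect-tail-bound} vanishes in the limit. Theorem~\ref{thm:main-A}(iii) applies because $d_m\to0$ and $m d_m/\log m\to\infty$. It gives
\[
 \liminf_{m\to\infty}\frac{\log C_{\mathbf q^{(m)}}^{(m)}}{m d_m}\ge\frac{\log2}{4}.
\]
Since $Y_1(\mathbf q)=y_1^\downarrow(\mathbf q)$ by definition, Theorem~\ref{thm:main-E} gives the matching upper bound $(\log2)/4$ as soon as
\[
 \limsup_{m\to\infty} \frac{y_{13}^\downarrow(\mathbf q^{(m)})}{y_1^\downarrow(\mathbf q^{(m)})}=0 .
\]
Here one uses that $\vartheta/2-(\log2)/4$ is a finite positive constant, so a vanishing limsup kills the second term. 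The two bounds together give convergence to $(\log 2)/4$.

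Case (b) is then immediate, because its hypothesis is exactly that this ratio tends to $0$.

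Case (a) needs a short argument that the ratio tends to zero, not just that it stays bounded. Under $y_1^\downarrow\ge1/12$, the profile is nonincreasing and sums to $1$, so $13\,y_{13}^\downarrow\le\sum_{i\le13}y_i^\downarrow\le1$. This alone only bounds the ratio by $12/13$, which is not enough. I would instead exploit the diameter. Since $d_m\to0$, every $q_i^{(m)}\ge 2-d_m$, so each $x_i\le 2/(2-d_m)-1=d_m/(2-d_m)\to0$. Hence $y_1^\downarrow=x_{\max}/\sigma\ge1/12$ forces $\sigma\le 12\,d_m/(2-d_m)$.

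The real issue is the relation between $d_m$ and the deficits. The diameter is governed by $\max q_i-\min q_i$. If some coordinate equals $2$ (i.e.\ $x_i=0$), then $d_m$ is comparable to the largest deficit, but if all $q_i<2$ the diameter could be much smaller than $x_{\max}$. So the cleanest route is to compare $y_{13}^\downarrow$ with $y_1^\downarrow$ through the sequence itself. I expect the intended reading is that Theorem~\ref{thm:main-E}'s bound, together with the elementary estimate $y_{13}^\downarrow\le 1/13$, yields
\[
 \limsup \frac{\log C}{m d_m}\le \frac{\log 2}{4}+\Bigl(\frac{\vartheta}{2}-\frac{\log2}{4}\Bigr)\frac{12}{13},
\]
which is not sharp. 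So in case (a) I would return to the proof mechanism behind Theorem~\ref{thm:main-E}, namely the level-set decomposition. When $y_1^\downarrow\ge1/12$, at most $12$ coordinates can carry mass comparable to $y_1^\downarrow$ at the top level. All level-set blocks of length $k\le 12$ are then in the exact Haagerup range $p_k\le p_{12}<p_{\mathrm H}$, where the lower and upper extension factors coincide and give exactly the coefficient $(\log 2)/4$.

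The main obstacle is this case (a) refinement: showing that the excess contribution of levels $k\ge13$ is $o(m d_m)$ when the top mass is at least $1/12$. I would first try to bound that excess by a constant multiple of $m\,\sigma\sum_{i\ge13}y_i^\downarrow$ times the small Khinchine discrepancy, and then use $\sigma\lesssim d_m$ together with the fact that the level structure above index $12$ is limited by $y_1^\downarrow\ge1/12$. If that fails, the fallback is to find a reformulation in which hypothesis (a) directly forces $y_{13}^\downarrow/y_1^\downarrow\to0$ along the relevant normalization.
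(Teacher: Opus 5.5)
Your lower bound and your treatment of case (b) match the paper's argument. Case (a), however, has a genuine gap: neither the level-set refinement nor the fallback can work.

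\textbf{Why both routes fail.} Hypothesis (a) does not force $y_{13}^\downarrow/y_1^\downarrow\to0$. Take the fixed profile
\[
 y=\Bigl(\tfrac1{12},\underbrace{\tfrac{11}{144},\ldots,\tfrac{11}{144}}_{12},0,\ldots,0\Bigr)
\]
with total deficit $s_m\to0$ and $ms_m/\log m\to\infty$, so that all hypotheses of Theorem~\ref{thm:main-E} hold. Then $y_1^\downarrow=1/12$, while $y_{13}^\downarrow/y_1^\downarrow=11/12$ for every $m$. So your fallback is impossible.

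The same example defeats the level-set refinement. Here the nested decomposition \eqref{eq:canonical-level-decomposition} is
\[
 y=\tfrac1{144}\mathbf1_{\{1\}}+\tfrac{143}{144}\cdot\tfrac1{13}\mathbf1_{[13]} ,
\]
so almost all the weight sits on level $13$. At that level $p_{13}>p_{\mathrm H}$, and the Khinchine excess $\delta_{13}$ is a fixed \emph{positive} number (about $1.2\cdot10^{-3}$), not an $o(1)$ quantity. Carrying this through \eqref{eq:multiscale-defect-upper} gives only $\limsup\le(\log2)/4+\tfrac{11}{24}\delta_{13}$.

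Hence no estimate showing that levels $j\ge13$ contribute $o(md_m)$ can be extracted from the level sets of $y$ itself; the loss is built into that convex decomposition. Your claim that $y_1^\downarrow\ge1/12$ limits how many coordinates carry mass comparable to $y_1^\downarrow$ is also false: here thirteen do.

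\textbf{The missing idea.} Replace $y$ by a majorant with the same top mass $a_m=y_1^\downarrow$ that lives entirely in the exact Haagerup range. Since $a_m\ge1/12$, choose $k_m\le11$ and $\theta_m\in[0,1]$ with $a_m=\theta_m/k_m+(1-\theta_m)/(k_m+1)$, and set
\[
 z^{(m)}=\theta_m k_m^{-1}\mathbf1_{[k_m]}+(1-\theta_m)(k_m+1)^{-1}\mathbf1_{[k_m+1]} .
\]
Then $X_r(y)\le ra_m=X_r(z^{(m)})$ for $r\le k_m$, and $X_r(z^{(m)})=1$ for $r>k_m$. Hence $y\prec z^{(m)}$, and Theorem~\ref{thm:main-C} gives $C_{\mathbf q^{(m)}}^{(m)}\le\mathfrak C_{m,s_m}(z^{(m)})$.

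Interpolating the exact constants \eqref{eq:Cmk-exact} at levels $k_m,k_m+1\le12$ yields
\[
 \log C_{\mathbf q^{(m)}}^{(m)}\le\tfrac{\log2}{2}\,s_m(ma_m-1)+O(s_m).
\]
In the example above, this simply replaces $y$ by $\frac1{12}\mathbf1_{[12]}$. This is how the paper proves (a), without using Theorem~\ref{thm:main-E} at all.

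\textbf{The diameter normalization.} Your worry here is settled by $b_m=y_m^\downarrow\le1/(m-1)$. This gives $b_m/a_m\to0$, then $s_m\to0$, and finally $d_m=2s_ma_m(1+o(1))$ via \eqref{eq:diameter-radial-profile}. Note also that $q_i^{(m)}\ge2-d_m$ is false in general, since $\max_iq_i^{(m)}$ need not equal $2$. Only $q_i^{(m)}\ge2-d_m-2/(m+1)$ holds.
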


\section{The diameter scale}\label{sec:known-estimates}

For a finitely supported array $a$ and $\mathbf q\in[1,2]^m$, we abbreviate
the mixed norm from Section~\ref{sec:introduction} by $\|a\|_{\mathbf q}$.
Its coordinatewise monotonicity gives
\[
 r_i\le q_i\ (1\le i\le m)
 \quad\Longrightarrow\quad
 \|a\|_{\mathbf q}\le \|a\|_{\mathbf r}.
\]
If $N_1,\ldots,N_m\ge1$ and
$T:\ell_\infty^{N_1}\times\cdots\times\ell_\infty^{N_m}\to\R$, let
$\widehat T=(T(e_{j_1},\ldots,e_{j_m}))$ be its coefficient array.  Then
\begin{equation}\label{eq:finite-dimensional-sup}
 C_{\mathbf q}^{(m)}=
 \sup_{N_1,\ldots,N_m\ge1}\ \sup_{0\ne T}
 \frac{\|\widehat T\|_{\mathbf q}}{\|T\|}.
\end{equation}

We also retain the lower Khinchine constants from
\eqref{eq:intro-Khinchine-definition}.  Haagerup's formula
\cite[Theorem~A]{Haagerup1981} reads
\begin{equation}\label{eq:Haagerup-formula}
 A_p=
 \begin{cases}
 2^{\frac{1}{2}-\frac{1}p},&1\le p\le p_{\mathrm H},\\[2mm]
 \displaystyle\sqrt2\left(\frac{\Gamma((p+1)/2)}{\sqrt\pi}\right)^{1/p},
 &p_{\mathrm H}\le p\le2,
 \end{cases}
\end{equation}
\subsection{\texorpdfstring{{Auxiliary estimates}}{Auxiliary estimates}}\label{subsec:Khinchine-estimates}

\begin{lemma}\label{lem:elementary-Ap-bound}
For every $1\le p\le2$,
\begin{equation}\label{eq:elementary-Ap-bound}
 A_p^{-1}\le \exp\left(\frac{1}p-\frac{1}{2}\right).
\end{equation}
In particular, for $p_k$ as in \eqref{eq:intro-pk-definition},
\begin{equation}\label{eq:Ap-pk-bound}
 A_{p_k}^{-1}\le e^{1/(2k)}.
\end{equation}
Moreover, for every $k\ge2$,
\begin{equation}\label{eq:Ap-pk-dyadic-bound}
 A_{p_k}^{-1}\le 2^{1/(k+2)}.
\end{equation}
\end{lemma}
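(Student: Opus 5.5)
The plan is to work directly from Haagerup's formula \eqref{eq:Haagerup-formula}, treating its two regimes separately. It is convenient to use the variable $u=1/p\in[1/2,1]$, in which the target \eqref{eq:elementary-Ap-bound} reads $\log A_p^{-1}\le u-\tfrac12$.

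\emph{The regime $1\le p\le p_{\mathrm H}$.} Here $A_p^{-1}=2^{1/p-1/2}=\exp\bigl((\tfrac1p-\tfrac12)\log2\bigr)$. Since $\tfrac1p-\tfrac12\ge0$ and $\log2<1$, the bound \eqref{eq:elementary-Ap-bound} follows at once.

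\emph{The regime $p_{\mathrm H}\le p\le 2$.} Set
\[
 g(u):=\log A_{1/u}^{-1}=-\tfrac12\log2+u\Bigl(\tfrac12\log\pi-\log\Gamma(s)\Bigr),
 \qquad s=s(u)=\tfrac12\Bigl(\tfrac1u+1\Bigr),
\]
for $u\in[1/2,1/p_{\mathrm H}]\subset[0.5,0.5414]$. Since $\Gamma(3/2)=\sqrt\pi/2$, we have $g(1/2)=0$. It therefore suffices to show $g'(u)\le1$ on this interval, and then integrate from $u=1/2$. A direct computation gives
\[
 g'(u)=\tfrac12\log\pi-\log\Gamma(s)+\frac{1}{2u}\,\psi(s),
\]
where $\psi=\Gamma'/\Gamma$ and $s\in[1.42,1.5]$. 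On this short range $\Gamma$ is decreasing and $\psi$ is increasing, since the minimum of $\Gamma$ lies at $s\approx1.4616$ and $\psi$ changes sign there. Hence crude monotone bounds suffice: $-\log\Gamma(s)\le-\log\Gamma_{\min}\approx0.121$, $\psi(s)\le\psi(3/2)\approx0.0365$, and $1/(2u)\le1$. With $\tfrac12\log\pi\approx0.5724$, this yields $g'(u)\lesssim0.73<1$.

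This numerical verification is the only real obstacle. I would present it with explicit rigorous bounds, for instance $\Gamma\ge0.885$ and $\psi(3/2)=2-\gamma-2\log2<0.04$, so that the margin $1-0.73$ absorbs any rounding. Evaluating at $p=p_k$, where $1/p_k-1/2=1/(2k)$, then gives \eqref{eq:Ap-pk-bound}.

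\emph{The dyadic bound \eqref{eq:Ap-pk-dyadic-bound}.} I split according to whether $p_k$ lies below or above $p_{\mathrm H}$.
\begin{itemize}
\item For $2\le k\le12$ we have $p_k\le p_{12}<p_{\mathrm H}$. Hence $A_{p_k}^{-1}=2^{1/(2k)}$, and $1/(2k)\le1/(k+2)$ is equivalent to $k\ge2$.
\item For $k\ge13$ I would simply use \eqref{eq:Ap-pk-bound}. The inequality $e^{1/(2k)}\le2^{1/(k+2)}$ is equivalent to $k+2\le2k\log2$, that is, to $k(2\log2-1)\ge2$. This holds as soon as $k\ge6$, and in particular for all $k\ge13$.
\end{itemize}
No further Gaussian-regime estimate is needed for this part.
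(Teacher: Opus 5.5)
Your proof is correct. The first branch, the specialization to $p_k$, and the dyadic bound (splitting at $k\le12$ versus $k\ge13$ and using $k+2\le2k\log2$) are the same as in the paper.

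On the Gaussian branch $p_{\mathrm H}\le p\le2$ you take a different route. You bound the derivative of $g(u)=\log A_{1/u}^{-1}$ by about $0.73$, using numerical values of $\Gamma_{\min}$, $\psi(3/2)$ and $\frac12\log\pi$, and then integrate from $g(1/2)=0$.

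The paper instead uses the supporting line of the convex function $\log\Gamma$ at $3/2$, namely $\log\Gamma(\xi)\ge\log\Gamma(3/2)+\psi(3/2)(\xi-3/2)$. Substituting this gives the closed-form margin $\log A_p-(\frac12-\frac1p)\ge\frac{2-p}{2p}(\gamma+\log2-1)$. The only numerical input is $\gamma+\log2>1$. The same computation actually yields the sharper bound $\log A_p^{-1}\le(2-\log2-\gamma)(\frac1p-\frac12)=2\vartheta(\frac1p-\frac12)$. This says $g$ lies below its tangent line at $u=1/2$, and the slope there is exactly $g'(1/2)=\log2+\psi(3/2)=2\vartheta$, the value your derivative formula gives at that endpoint.

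What each approach buys: yours has ample slack and is robust, but it needs certified values of $\Gamma_{\min}$ and $\psi(3/2)$. The paper's argument is a single exact convexity step that nearly avoids numerics and gives the sharp slope.

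One small correction. $\Gamma$ is not decreasing on $[1.42,1.5]$, because its minimizer $1.4616\ldots$ lies inside that interval; for the same reason $\psi$ changes sign there. Neither point damages the argument. The bound you actually use, $\Gamma(s)\ge\Gamma_{\min}$, is global. Also, $\psi(s)/(2u)\le\max\{0,\psi(3/2)\}=\psi(3/2)$ holds whatever the sign of $\psi(s)$, since $0<1/(2u)\le1$. Say this explicitly rather than simply multiplying the bounds $\psi(s)\le\psi(3/2)$ and $1/(2u)\le1$.
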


\begin{proof}
If $1\le p\le p_{\mathrm H}$, the first branch of \eqref{eq:Haagerup-formula}
gives
\[
 \log A_p=\left(\frac{1}{2}-\frac{1}p\right)\log2
 \ge \frac{1}{2}-\frac{1}p,
\]
because $\frac{1}{2}-\frac{1}p\le0$ and $\log2<1$.

Suppose now that $p_{\mathrm H}\le p\le2$ and set $\xi=(p+1)/2$.  The second branch
gives
\begin{equation}\label{eq:elementary-Ap-log}
 \log A_p=\frac{1}{2}\log2+\frac{1}p\left(\log\Gamma(\xi)-\frac{1}{2}\log\pi\right).
\end{equation}
Convexity of $\log\Gamma$, applied at $3/2$, together with
$\Gamma(3/2)=\sqrt\pi/2$ and
$\psi(3/2)=2-\gamma-2\log2$, yields
\begin{align*}
 \log\Gamma(\xi)
 &\ge \log\Gamma(3/2)+\psi(3/2)(\xi-3/2)\\
 &=\frac{1}{2}\log\pi-\log2-\frac{2-p}{2}(2-\gamma-2\log2).
\end{align*}
Substituting this into \eqref{eq:elementary-Ap-log} and collecting terms gives
\[
 \log A_p-\left(\frac{1}{2}-\frac{1}p\right)
 \ge \frac{2-p}{2p}(\gamma+\log2-1).
\]
The last quantity is nonnegative.  Thus
$\log A_p\ge\frac{1}{2}-\frac{1}p$ on the second branch as well, proving
\eqref{eq:elementary-Ap-bound}.  Since
$1/p_k-1/2=1/(2k)$, \eqref{eq:Ap-pk-bound} follows.  If
$2\le k\le12$, then $p_k<p_{\mathrm H}$ and
$A_{p_k}^{-1}=2^{1/(2k)}\le2^{1/(k+2)}$.  If $k\ge13$, then
\eqref{eq:Ap-pk-bound} gives \eqref{eq:Ap-pk-dyadic-bound}, because
$k+2\le2k\log2$ for $k\ge6$.
\end{proof}

\begin{lemma}\label{lem:Khinchine-asymptotics}
{For $p_k$ as in \eqref{eq:intro-pk-definition}, with $\vartheta=(2-\log2-\gamma)/2$,}
\begin{equation}\label{eq:Khinchine-asymptotics}
 \lim_{k\to\infty}(-k\log A_{p_k})=\vartheta.
\end{equation}
\end{lemma}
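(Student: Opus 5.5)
For all large $k$ we have $p_k=2k/(k+1)\ge p_{\mathrm H}$, since $p_k\to2$ and $p_{\mathrm H}<2$. So the plan is to use the second branch of Haagerup's formula \eqref{eq:Haagerup-formula} and do a first-order Taylor expansion around $p=2$. By \eqref{eq:elementary-Ap-log}, with $\xi_k=(p_k+1)/2$,
\[
 -k\log A_{p_k}
 = -\frac{k}{2}\log2-\frac{k}{p_k}\Bigl(\log\Gamma(\xi_k)-\tfrac12\log\pi\Bigr).
\]
Since $k/p_k=(k+1)/2$, this becomes
\[
 -k\log A_{p_k}
 = -\frac{k}{2}\log2-\frac{k+1}{2}\Bigl(\log\Gamma(\xi_k)-\tfrac12\log\pi\Bigr).
\]

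Next expand $\log\Gamma$ at $3/2$. We have $\xi_k-3/2=(p_k-2)/2=-1/(k+1)$. Using $\log\Gamma(3/2)=\tfrac12\log\pi-\log2$ and $\psi(3/2)=2-\gamma-2\log2$ (both already used in the proof of \cref{lem:elementary-Ap-bound}), Taylor's theorem with $\log\Gamma\in C^2$ near $3/2$ gives
\[
 \log\Gamma(\xi_k)-\tfrac12\log\pi=-\log2-\frac{2-\gamma-2\log2}{k+1}+O(k^{-2}).
\]
Substituting this in, the leading terms $-\frac{k}{2}\log2+\frac{k+1}{2}\log2=\frac12\log2$ combine. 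The linear term contributes $\frac{2-\gamma-2\log2}{2}$, and the remainder is $\frac{k+1}{2}O(k^{-2})=O(1/k)$. The total is therefore
\[
 \tfrac12\log2+\tfrac12(2-\gamma-2\log2)+O(1/k)=\frac{2-\log2-\gamma}{2}+O(1/k)=\vartheta+O(1/k),
\]
and letting $k\to\infty$ proves \eqref{eq:Khinchine-asymptotics}.

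The only delicate point is the bookkeeping: the order-$k$ terms must cancel exactly, so the constant $\log\Gamma(3/2)$ and the factor $k/p_k=(k+1)/2$ have to be tracked precisely. The remainder bound itself is routine, since $(\log\Gamma)''=\psi'$ is bounded on $[1,3/2]$.
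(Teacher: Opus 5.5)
Your proof is correct and follows the same route as the paper: for large $k$ you use the second branch of Haagerup's formula, Taylor-expand $\log\Gamma$ at $3/2$ with $\xi_k-3/2=-1/(k+1)$, and substitute $\Gamma(3/2)=\sqrt\pi/2$ and $\psi(3/2)=2-\gamma-2\log2$. The only difference is bookkeeping: you expand $-k\log A_{p_k}$ directly and get $\vartheta+O(1/k)$, while the paper first writes $\log A_{p_k}=-\vartheta/(k+1)+O(k^{-2})$ and then multiplies by $k$.
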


\begin{proof}
For large $k$, the second branch of \eqref{eq:Haagerup-formula} applies and
$(p_k+1)/2=3/2-1/(k+1)$.  Taylor expansion at $3/2$ gives
\[
 \log\Gamma\!\left(\frac{p_k+1}{2}\right)
 =\log\Gamma(3/2)-\frac{\psi(3/2)}{k+1}+O(k^{-2}).
\]
Substituting the values of $\Gamma(3/2)$ and $\psi(3/2)$ into Haagerup's
formula, we find
\[
 \log A_{p_k}
 =-\frac{2-\log2-\gamma}{2(k+1)}+O(k^{-2})
 =-\frac{\vartheta}{k+1}+O(k^{-2}).
\]
Hence $-k\log A_{p_k}=\vartheta k/(k+1)+O(k^{-1})$, and the conclusion
follows.
\end{proof}

\medskip
The mixed-norm facts below are standard; see \cite[Proposition~3.1]{BayartPellegrinoSeoane2014} and
\cite[Proposition~2.1, Corollary~2.3 and Proposition~3.1]{ABPS2014}.

\begin{proposition}\label{prop:known-mixed-norm-tools}
Let $\mathbf q=(q_1,\ldots,q_m)$ be a Bohnenblust--Hille exponent.

\begin{enumerate}
\item If $p=\min_{1\le i\le m}q_i$ and $\mathbf q^+=(q_1,\ldots,q_m,2)$, then
\[
 C_{\mathbf q^+}^{(m+1)}\le A_p^{-1}C_{\mathbf q}^{(m)}.
\]

\item Suppose that two adjacent coordinates of $\mathbf q$ are $(q,p)$ with $1\le p\le q\le2$, and let $\mathbf q'$ be obtained by replacing them by $(p,q)$. Then
\[
 \textcolor{black}{C_{\mathbf q}^{(m)}\le C_{\mathbf q'}^{(m)}.}
\]
Consequently, if $\mathbf t$ is any permutation of the vector with $k$ copies of $p$ followed by $m-k$ copies of $2$, then
\[
 \textcolor{black}{C_{\mathbf t}^{(m)}
 \le C_{(\underbrace{p,\ldots,p}_{k},\underbrace{2,\ldots,2}_{m-k})}^{(m)}.}
\]

\item Let $2\le k\le m$. If $p_k\le q_i\le2$ for every $1\le i\le m$, then
\begin{equation}\label{eq:Khinchine-interpolation}
 C_{\mathbf q}^{(m)}
 \le A_{p_k}^{-(m-k)}B_{\R,k}^{\mult}.
\end{equation}
\end{enumerate}
\end{proposition}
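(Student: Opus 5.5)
The three assertions are independent and I would prove them in the order (2), (1), (3), since (3) uses the other two. The tool throughout is the Minkowski integral inequality applied layer by layer in the iterated mixed norm. Concretely: if all exponents of a mixed norm are at least $p$, then the mixed norm of $L_p$-valued entries is bounded by the $L_p$-norm of the mixed norm. This follows by repeatedly using that $\ell_{r/p}$ is a normed space when $r\ge p$.

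For (2), the ordering of the two adjacent layers is governed by $\|\cdot\|_{\ell_q(\ell_p)}\le\|\cdot\|_{\ell_p(\ell_q)}$ for $p\le q$, which is Minkowski's inequality in $\ell_{q/p}$. Applied at the two adjacent positions, with the inner layers fixed and the outer layers being monotone lattice norms, this gives $\|a\|_{\mathbf q}\le\|a\|_{\mathbf q'}$ for every array. Since $\mathbf q'$ is a permutation of $\mathbf q$, it is again admissible, so $C_{\mathbf q}^{(m)}\le C_{\mathbf q'}^{(m)}$. The consequence for $\mathbf t$ follows by bubble-sorting the copies of $p$ to the front; each adjacent transposition moves a $p$ ahead of a $2$ and does not decrease the constant.

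For (1), I take $T$ $(m+1)$-linear on finite-dimensional $\ell_\infty$ spaces and fix $(j_1,\ldots,j_m)$. Haagerup's lower inequality \eqref{eq:intro-Khinchine-definition} gives
\[
\Bigl(\sum_{j}|T(e_{j_1},\ldots,e_{j_m},e_j)|^2\Bigr)^{1/2}\le A_p^{-1}\Bigl(\int_0^1|T_t(e_{j_1},\ldots,e_{j_m})|^p\,dt\Bigr)^{1/p},
\qquad T_t:=T(\cdot,\ldots,\cdot,\textstyle\sum_j r_j(t)e_j).
\]
Taking the $\mathbf q$-mixed norm in $(j_1,\ldots,j_m)$ and using $p\le q_i$ for all $i$, the Minkowski step bounds the right side by $A_p^{-1}\bigl(\int\|\widehat{T_t}\|_{\mathbf q}^p\,dt\bigr)^{1/p}$. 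Since $\|\widehat{T_t}\|_{\mathbf q}\le C_{\mathbf q}^{(m)}\|T_t\|\le C_{\mathbf q}^{(m)}\|T\|$, and \eqref{eq:finite-dimensional-sup} lets me work in finite dimensions, (1) follows.

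For (3), I pass to reciprocal coordinates $u_i=1/q_i-1/2\in[0,1/(2k)]$ with $\sum u_i\le 1/2$. The polytope cut out by these constraints has vertices with every coordinate in $\{0,1/(2k)\}$ and at most $k$ nonzero entries. The integrality of the vertices is the point to double-check: the box is scaled so that the sum constraint is a multiple of the box side. Mixed norms are log-convex in the reciprocal exponents; this is Hölder's inequality for mixed norms (Benedek--Panzone), obtained by iterating the two-point Hölder inequality. Hence, writing $\mathbf q$ as a convex combination of vertices $\mathbf r^{(v)}$ in reciprocal coordinates, I get $\|a\|_{\mathbf q}\le\prod_v\|a\|_{\mathbf r^{(v)}}^{\theta_v}$ and therefore $C_{\mathbf q}^{(m)}\le\max_v C_{\mathbf r^{(v)}}^{(m)}$. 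A vertex with fewer than $k$ coordinates equal to $p_k$ is coordinatewise larger than one with exactly $k$, so by monotonicity I may assume each vertex is a permutation of $(p_k^{\,k},2^{\,m-k})$. By (2), its constant is at most that of $(p_k,\ldots,p_k,2,\ldots,2)$. Finally, applying (1) $m-k$ times starting from $C^{(k)}_{(p_k,\ldots,p_k)}=B^{\mult}_{\R,k}$, with minimum exponent always $p_k$, gives \eqref{eq:Khinchine-interpolation}.

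The main obstacle is the interpolation step in (3). I have to verify that the multi-vertex Hölder inequality genuinely holds for iterated mixed norms with varying exponents in every layer. I would prove it by induction on the number of vertices, using the two-point case at each stage, together with the vertex description of the polytope.
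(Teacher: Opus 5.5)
\textbf{There is a genuine error in part (2).} Parts (1) and (3) are fine and follow the paper's route. In (1) you apply Khinchine in the new innermost index and then Minkowski across the layers, which is legitimate because every $q_i\ge p$. In (3) you use vertex interpolation, then rearrangement, then iterated extension. The only difference is that you interpolate over the whole subcritical polytope and use monotonicity at vertices with fewer than $k$ active coordinates. The paper instead interpolates on the critical face and then lowers a subcritical $\mathbf q$ to a critical $\mathbf r\le\mathbf q$. The two are equivalent.

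In (2) you claim that Minkowski's inequality gives $\|a\|_{\mathbf q}\le\|a\|_{\mathbf q'}$ for every array. This is false. Take $m=2$, $\mathbf q=(2,1)$, $\mathbf q'=(1,2)$, and let $a_{1,j}=1$ for $1\le j\le N$, with all other entries zero. Then $\|a\|_{\mathbf q}=N$, while $\|a\|_{\mathbf q'}=N^{1/2}$.

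Minkowski's inequality in $\ell_{q/p}$ interchanges the order of summation while each exponent stays attached to its own index. It does not reassign the exponents among fixed indices. What it actually gives is
\[
\|a\|_{(\ldots,q,p,\ldots)}\le\|a^{(i\,i+1)}\|_{(\ldots,p,q,\ldots)},
\]
where $a^{(i\,i+1)}$ is obtained by interchanging the $i$-th and $(i+1)$-th indices of $a$.

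To reach $C_{\mathbf q}^{(m)}\le C_{\mathbf q'}^{(m)}$ you need one further observation. The array $a^{(i\,i+1)}$ is the coefficient array of the form obtained from $T$ by interchanging its $i$-th and $(i+1)$-th variables, and that form has the same norm $\|T\|$. Hence
\[
\|\widehat T\|_{\mathbf q}\le\|a^{(i\,i+1)}\|_{\mathbf q'}\le C_{\mathbf q'}^{(m)}\|T\|,
\]
and taking suprema gives the inequality. This is exactly how the paper argues. With this repair, your bubble-sort consequence holds, and so does your proof of (3), which relies on it.
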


\begin{proof}
Assertion (1) is the standard one-coordinate Khinchine extension: apply the lower Khinchine inequality in the new coefficient index and then the $m$-linear estimate for $\mathbf q$; see the references preceding the proposition.

\textcolor{black}{For (2), let $a$ be the coefficient array of $T$ and let
$a^{(i\,i+1)}$ be obtained by interchanging the two adjacent indices.
Minkowski's inequality gives}
\[
 \textcolor{black}{\|a\|_{(\ldots,q,p,\ldots)}
 \le
 \|a^{(i\,i+1)}\|_{(\ldots,p,q,\ldots)},
 \qquad p\le q.}
\]
\textcolor{black}{The array on the right belongs to the form obtained by
interchanging the same two variables, whose norm equals $\|T\|$. Taking
suprema proves the first assertion; repeated adjacent transpositions then
move every copy of $p$ to the left and prove the second.}

For (3), first consider a critical vector $\mathbf r$ with
$p_k\le r_i\le2$.  Its reciprocal deficits
\[
 z_i=2\left(\frac{1}{r_i}-\frac{1}{2}\right)
\]
satisfy $0\le z_i\le1/k$ and $\sum_i z_i=1$.  Hence $z$ lies in the convex
hull of the vectors $k^{-1}\mathbf1_S$, $|S|=k$.  At each corresponding
endpoint, \textcolor{black}{first use (2) to move the $k$ copies of $p_k$ to
the first $k$ positions.  The directed rearrangement does not increase the
optimal constant.  Then} the usual Khinchine argument in the complementary
$m-k$ coordinates, followed by the $k$-linear Bohnenblust--Hille inequality,
gives the constant $A_{p_k}^{-(m-k)}B_{\R,k}^{\mult}$.  Mixed-norm
interpolation therefore gives \eqref{eq:Khinchine-interpolation} for
$\mathbf r$.

For a subcritical $\mathbf q$, write
$x_i=2(1/q_i-1/2)$.  Since $x_i\le1/k$ and $\sum_i x_i\le1$, one may choose
$z_i\in[x_i,1/k]$ with $\sum_i z_i=1$; indeed, the available capacity is
$m/k-\sum_i x_i\ge1-\sum_i x_i$.  The critical vector
$r_i=2/(1+z_i)$ satisfies $r_i\le q_i$.  Coordinatewise monotonicity and the
critical case complete the proof.
\end{proof}

\medskip

\begin{lemma}\label{lem:diameter-lower}
{Let $\mathbf q\in\mathcal B_m$, let
$q_{\min}:=\min_{1\le i\le m}q_i$, and set $d=d(\mathbf q)$. Then}
\begin{equation}\label{eq:diameter-lower}
 C_{\mathbf q}^{(m)}\ge
 2^{\frac{(m-1)d}{2(2-d)}}
 \ge \exp\left(\frac{\log2}{4}(m-1)d\right).
\end{equation}
\end{lemma}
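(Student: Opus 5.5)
The plan is to reduce to a single anisotropic coordinate by monotonicity, and then to test the resulting vector on the explicit ``doubling'' forms that realise $C_{(1,2,\ldots,2)}^{(m)}=2^{(m-1)/2}$.

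\emph{Reduction.} Let $i_0$ be an index with $q_{i_0}=q_{\min}$. Since $\max_i q_i\le 2$, we have $q_{\min}\le 2-d$. Define $\mathbf r$ by $r_{i_0}=2-d$ and $r_i=2$ for $i\ne i_0$, so that $q_i\le r_i$ for every $i$. The coordinatewise monotonicity of mixed norms gives $\|a\|_{\mathbf r}\le\|a\|_{\mathbf q}$ for every array, hence $C_{\mathbf q}^{(m)}\ge C_{\mathbf r}^{(m)}$. The vector $\mathbf r$ lies in $\mathcal B_m$, but this is not even needed for a lower bound. The order of the coordinates matters here: Proposition~\ref{prop:known-mixed-norm-tools}(2) only moves the small exponent to the left in the direction that increases the constant. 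So I will keep $2-d$ at the slot $i_0$ and choose a test form that does not see the position of the special coordinate.

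\emph{Test forms.} I take the standard recursive forms $T_m$ on $(\ell_\infty^{n})^m$ with $n=2^{m-1}$. Set $T_1(x)=x_1$, and let $T_{k+1}$ be obtained by combining $(x^{k+1}_1+x^{k+1}_2)\,T_k(\cdot)$ with $(x^{k+1}_1-x^{k+1}_2)\,T_k(\cdot)$, where the second copy of $T_k$ acts on fresh coordinates of $x^1,\ldots,x^k$. The triangle inequality then gives $\|T_m\|\le 2^{m-1}$. The coefficient array $\widehat T_m$ has entries in $\{0,\pm1\}$ and exactly $M=4^{m-1}$ nonzero entries, and the recursion makes the support \emph{fiber-uniform}. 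By this I mean that for each slot $i$ and each fixed choice of the outer indices $j_1,\ldots,j_i$ lying in the support projection, the number of completions $(j_{i+1},\ldots,j_m)$ in the support is the same. I would prove this by induction on $m$, using the two-block structure of the recursion.

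\emph{Computation.} Take $p=2-d$ placed at slot $i_0$ and exponent $2$ elsewhere. The inner $\ell_2$ norms are then constant on the support, and the outer $\ell_2$ sums combine with the $\ell_p$ sum over the $n$ active values of $j_{i_0}$. This gives
\[
 \|\widehat T_m\|_{\mathbf r}=M^{1/2}\,n^{\frac1p-\frac12}
 =2^{m-1}\,2^{(m-1)\frac{d}{2(2-d)}},
\]
because $\frac1p-\frac12=\frac{d}{2(2-d)}$. Dividing by $\|T_m\|\le2^{m-1}$ gives the first inequality in \eqref{eq:diameter-lower}. The second inequality is elementary: $2-d\le2$ gives $\frac{d}{2(2-d)}\ge\frac d4$, so $2^{\frac{(m-1)d}{2(2-d)}}\ge 2^{(m-1)d/4}=\exp\!\bigl(\tfrac{\log2}{4}(m-1)d\bigr)$.

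\emph{Main obstacle.} The delicate step is the fiber-uniformity of the support of $\widehat T_m$ with respect to every slot simultaneously. Without it, the mixed norm with the $\ell_p$ exponent in an interior slot could fall below the uniform value $M^{1/2}n^{1/p-1/2}$. If the inductive bookkeeping fails, the fallback is to use a form of tensor-product type. For instance, one can take a product $T_m(x^1,\ldots,x^m)=\prod$ of suitable $2\times2$ blocks arranged so that each variable's index set is a product of copies of $\{1,2\}$. For such a form uniformity is automatic, and one checks only that the norm is still $2^{m-1}$ with $4^{m-1}$ unimodular coefficients.
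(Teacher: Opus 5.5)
Your reduction to $\mathbf r$ (with $2-d$ in slot $i_0$, all other entries $2$) is fine, and so is the final elementary inequality. You are also right that the directed rearrangement cannot be used to move $2-d$ to slot $1$. The gap is in the Computation step, and it is not where you placed it.

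\emph{Where the computation fails.} Your $T_m$ is in fact fiber-uniform. For such a support with unimodular entries, the mixed norm equals $\prod_i f_i^{1/r_i}$, where $f_i$ is the number of admissible values of $j_i$ once $j_1,\ldots,j_{i-1}$ are fixed. Hence $\|\widehat T_m\|_{\mathbf r}=M^{1/2}f_{i_0}^{1/p-1/2}$. What matters is this conditional fiber $f_{i_0}$, not the total number of active values of $j_{i_0}$.

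In your recursion every earlier variable receives fresh coordinates at each step. So $j_1$ records all $m-1$ copy bits $b_2,\ldots,b_m$. By contrast, $j_i$ for $i\ge2$ records $b_{i+1},\ldots,b_m$, which are already fixed by $j_1$, plus one free bit. Thus $f_1=2^{m-1}$ but $f_i=2$ for $i\ge2$. For $i_0\ge2$ your form therefore only gives $\|\widehat T_m\|_{\mathbf r}/\|T_m\|=2^{1/p-1/2}$.

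Concretely, in $T_3$ one has $j_2\in\{1,2\}$ when $j_1\in\{1,2\}$, and $j_2\in\{3,4\}$ when $j_1\in\{3,4\}$. So for $\mathbf r=(2,p,2)$ the ratio is $2^{1/p-1/2}$, not $2^{2(1/p-1/2)}$. Relabelling the variables of $T_m$ does not rescue the argument once $i_0\ge3$: one checks that the conditional fiber at slot $i_0$ is then at most $2^{m-2}$.

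\emph{The missing idea.} Only the distinguished variable should carry the copy bits; this amounts to iterating Lemma~\ref{lem:duplication} at one fixed coordinate. Explicitly:
\begin{itemize}
\item Put $u\in\ell_\infty^{2^{m-1}}$, indexed by $b\in\{1,2\}^{m-1}$, in slot $i_0$.
\item Let $x^{(1)},\ldots,x^{(m-1)}\in\ell_\infty^2$ fill the remaining slots in any order.
\item Set $T=\sum_b u_b\prod_{k=1}^{m-1}c_k(b_k)$, with $c_k(1)=x^{(k)}_1+x^{(k)}_2$ and $c_k(2)=x^{(k)}_1-x^{(k)}_2$.
\end{itemize}
Then $\sup_u|T|=\prod_k\bigl(|c_k(1)|+|c_k(2)|\bigr)\le2^{m-1}$. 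The $4^{m-1}$ coefficients are $\pm1$ on the full product grid $[2^{m-1}]\times[2]^{m-1}$. Hence $\|\widehat T\|_{\mathbf r}=2^{(m-1)/p}\,2^{(m-1)/2}$ whatever $i_0$ is, which gives exactly $2^{(m-1)(1/p-1/2)}$.

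Your fallback may be aiming at this, but as stated its requirements are all met by your $T_m$ as well. Index sets that are products of copies of $\{1,2\}$, norm $2^{m-1}$, $4^{m-1}$ unimodular coefficients, and uniformity do not distinguish the two forms. The property that actually matters is that the index in the $p$-slot be independent of all the other indices.

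For comparison, the paper does no construction here. It quotes the universal bound $C_{\mathbf q}^{(m)}\ge2^{1-m+\sum_i1/q_i+(m-2)/q_{\min}}$ from \cite{CostaNunezPellegrinoRaposo2026} and then uses $q_i\le2$ and $q_{\min}\le2-d$. The repaired version of your argument would be a self-contained proof of the special case needed.
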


\begin{proof}
{The universal estimate \cite[Lemma~1.6]{CostaNunezPellegrinoRaposo2026} gives}
\begin{equation}\label{eq:universal-lower}
 C_{\mathbf q}^{(m)}\ge
 2^{\,1-m+\sum_i 1/q_i+(m-2)/q_{\min}}.
\end{equation}
{Since the remaining exponents are at most $2$,}
\[
 C_{\mathbf q}^{(m)}\ge
 2^{(m-1)(1/q_{\min}-1/2)}.
\]
{Finally $q_{\min}\le2-d$, which gives the first inequality in
\eqref{eq:diameter-lower}; the second follows from $2(2-d)\le4$.}
\end{proof}

\begin{lemma}\label{lem:diameter-upper}
Let $\mathbf q$ be an $m$-linear Bohnenblust--Hille exponent. Write
\[
 q_{\min}:=\min_i q_i,\qquad q_{\max}:=\max_i q_i,
\]
let $d=d(\mathbf q)$, and set
\[
 \varepsilon=d+\frac{2}{m+1}.
\]
Assume $\varepsilon\le2/3$ and define
\begin{equation}\label{eq:optimized-k}
 k=\left\lfloor\frac{2}{\varepsilon}\right\rfloor-1.
\end{equation}
Then $2\le k\le m$, $p_k\le q_{\min}$, and
\begin{equation}\label{eq:diameter-upper-exact}
 C_{\mathbf q}^{(m)}\le
 A_{p_k}^{-(m-k)}B_{\R,k}^{\mult}.
\end{equation}
If $\varepsilon\le1/4$, then
\begin{equation}\label{eq:diameter-upper-explicit}
 C_{\mathbf q}^{(m)}\le
 M\left(\frac{2}{\varepsilon}\right)^\vartheta
 \exp\left(\frac{m\varepsilon}{4(1-\varepsilon)}\right).
\end{equation}
\end{lemma}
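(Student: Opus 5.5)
The plan is to reduce everything to \cref{prop:known-mixed-norm-tools}(3) with the prescribed $k$, so the main work is checking that $p_k\le q_{\min}$. After that, \cref{lem:elementary-Ap-bound} and a known polynomial bound for $B_{\R,k}^{\mult}$ give the explicit estimate.

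\emph{Range of $k$.} Since $\varepsilon\le 2/3$, we have $2/\varepsilon\ge3$, so $k\ge2$. Since $d\ge0$, we have $\varepsilon\ge 2/(m+1)$, so $2/\varepsilon\le m+1$ and $k\le m$.

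\emph{The key step: $q_{\min}\ge 2-\varepsilon$.} I argue by contradiction. Suppose $q_{\min}<2-\varepsilon$. Then every coordinate satisfies
\[
q_i\le q_{\min}+d<2-\varepsilon+d=2-\tfrac{2}{m+1}=p_m .
\]
Hence $\sum_i 1/q_i>m/p_m=(m+1)/2$, which contradicts $\mathbf q\in\mathcal B_m$.

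Now $k+1\le 2/\varepsilon$ gives $2/(k+1)\ge\varepsilon$. Therefore
\[
p_k=2-\tfrac{2}{k+1}\le 2-\varepsilon\le q_{\min},
\]
and all coordinates satisfy $p_k\le q_i\le2$. \cref{prop:known-mixed-norm-tools}(3) then yields \eqref{eq:diameter-upper-exact} directly.

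\emph{Explicit bound.} By \eqref{eq:Ap-pk-bound},
\[
A_{p_k}^{-(m-k)}\le e^{(m-k)/(2k)}\le e^{m/(2k)}.
\]
The floor gives $k>2/\varepsilon-2=2(1-\varepsilon)/\varepsilon$. Hence $1/(2k)<\varepsilon/(4(1-\varepsilon))$, which produces the factor $\exp\bigl(m\varepsilon/(4(1-\varepsilon))\bigr)$.

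For the multilinear constant I will invoke the known estimate
\[
B_{\R,k}^{\mult}\le M\,k^{\vartheta},
\qquad \vartheta=(2-\log2-\gamma)/2,
\]
from \cite{BayartPellegrinoSeoane2014}, with $M$ an absolute constant; presumably this is the $M$ intended in the statement. Since $k\le 2/\varepsilon$, this gives $B_{\R,k}^{\mult}\le M(2/\varepsilon)^\vartheta$. The hypothesis $\varepsilon\le1/4$ ensures $k\ge7$, so the cited asymptotic bound applies uniformly, possibly after enlarging $M$.

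The main obstacle is the inequality $q_{\min}\ge2-\varepsilon$. The choice $\varepsilon=d+2/(m+1)$ is tailored exactly so that the contradiction argument above closes; once that holds, the rest is bookkeeping.
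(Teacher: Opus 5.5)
Your proposal is correct and follows the paper's proof step for step: first $q_{\min}\ge 2-\varepsilon$, then \cref{prop:known-mixed-norm-tools}(3) with the chosen $k$, then \eqref{eq:Ap-pk-bound} together with $B_{\R,k}^{\mult}\le Mk^{\vartheta}$ and $2(1-\varepsilon)/\varepsilon\le k<2/\varepsilon$. The only difference is cosmetic: you get $q_{\min}\ge 2-\varepsilon$ by contradiction, while the paper derives it directly from $m/q_{\max}\le(m+1)/2$ and $q_{\min}=q_{\max}-d$; you also check $2\le k\le m$ explicitly, where the paper only asserts it.
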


\begin{proof}
Since $q_i\le q_{\max}$, admissibility implies
$m/q_{\max}\le(m+1)/2$.  Consequently,
\[
 q_{\min}=q_{\max}-d\ge2-\frac{2}{m+1}-d=2-\varepsilon.
\]
The choice of $k$ gives $2\le k\le m$ and
$2/(k+1)\ge\varepsilon$.  Thus
$p_k=2-2/(k+1)\le2-\varepsilon\le q_{\min}$, and
Proposition~\ref{prop:known-mixed-norm-tools}(3) yields
\eqref{eq:diameter-upper-exact}.

In addition,
\[
 k<\frac{2}{\varepsilon},
 \qquad
 k\ge\frac{2}{\varepsilon}-2=\frac{2(1-\varepsilon)}{\varepsilon}.
\]
If $\varepsilon\le1/4$, Lemma~\ref{lem:elementary-Ap-bound} and the known estimate
\begin{equation}\label{eq:classical-polynomial-bound}
 B_{\R,k}^{\mult}\le Mk^\vartheta
\end{equation}
for an absolute $M\ge1$ \cite[Corollary~3.3]{BayartPellegrinoSeoane2014} give
\[
 C_{\mathbf q}^{(m)}\le
 Mk^\vartheta\exp\left(\frac{m-k}{2k}\right),
\]
and hence \eqref{eq:diameter-upper-explicit}.
\end{proof}

Whenever $\varepsilon\le1/4$, we have therefore proved the two-sided estimate
\begin{equation}\label{eq:diameter-log-envelope}
 \frac{(m-1)d\log2}{2(2-d)}
 \le\log C_{\mathbf q}^{(m)}
 \le\frac{m\varepsilon}{4(1-\varepsilon)}
 +\vartheta\log\frac{2}{\varepsilon}+\log M.
\end{equation}

\subsection{The superpolynomial threshold}\label{subsec:superpoly}

\begin{proof}[Proof of Theorem~\ref{thm:main-A}(i)]
If $m d_m/\log m\to\infty$, then \eqref{eq:diameter-lower} gives
\[
\frac{\bigl(\log C_{\mathbf q^{(m)}}^{(m)}\bigr)}{\log m} \ge\frac{\log2}{4}
\frac{(m-1)d_m}{\log m} \longrightarrow\infty,
\]
which is exactly superpolynomial growth.

Conversely, if $m d_m/\log m$ does not tend to infinity, there are $L>0$
and a subsequence $(m_j)$ such that $m_jd_{m_j}\le L\log m_j$.  Along this
subsequence $d_{m_j}\to0$.  With
$\varepsilon_{m_j}=d_{m_j}+2/(m_j+1)$, we have
{
\[
m_j\varepsilon_{m_j}
=m_jd_{m_j}+\frac{2m_j}{m_j+1}
\le L\log m_j+2,
\]
and $\varepsilon_{m_j}\to0$.  Hence the first term on the right-hand side of
\eqref{eq:diameter-log-envelope} is $O(\log m_j)$.  Moreover,
$\varepsilon_{m_j}\ge 2/(m_j+1)$, so
$\log(2/\varepsilon_{m_j})=O(\log m_j)$, while $\log M=O(1)$.
Therefore
\[
\log C_{\mathbf q^{(m_j)}}^{(m_j)}=O(\log m_j).
\]
Consequently, $C_{\mathbf q^{(m_j)}}^{(m_j)}\le m_j^A$ for some $A>0$
and all sufficiently large $j$, which rules out superpolynomial growth.}
\end{proof}

\subsection{Vanishing diameter}\label{subsec:log-growth}

\begin{proof}[Proof of Theorem~\ref{thm:main-A}(iii)]

The lower estimate is \eqref{eq:diameter-lower}.  For the upper estimate, set
\[
\varepsilon_{m}=d_m+\frac{2}{m+1}, \qquad k_m=\left\lfloor \frac{2}{\varepsilon_{m}}\right\rfloor -1.
\]
{Since $d_m\to0$, we have $\varepsilon_m\to0$, and hence $k_m\to\infty$.  From
\[
 k_m+1\le \frac{2}{\varepsilon_m}<k_m+2
\]
it follows that $k_m\varepsilon_m\to2$.  Moreover,
$m\varepsilon_m=m d_m+O(1)$, and the hypothesis
$m d_m/\log m\to\infty$ implies $m\varepsilon_m\to\infty$; consequently
$k_m/m\to0$.  Finally, $\varepsilon_m/d_m\to1$, because
$2/((m+1)d_m)\to0$ under the same hypothesis.}

{Now \eqref{eq:diameter-upper-exact} gives
\[
 C_{\mathbf q^{(m)}}^{(m)}
 \le A_{p_{k_m}}^{-(m-k_m)}B_{\mathbb R,k_m}^{\mult},
\]
and the growth estimate \eqref{eq:classical-polynomial-bound} yields
\[
\log C_{\mathbf q^{(m)}}^{(m)} \le(m-k_m)(-\log A_{p_{k_m}})
+\vartheta\log k_m+\log M.
\]
Thus the last two terms are negligible on the $m d_m$ scale, since
$k_m=O(1/d_m)\le m$ for all sufficiently large $m$ and hence
$\log k_m=O(\log m)=o(m d_m)$.  For the remaining term,}
\begin{align*}
\frac{(m-k_m)(-\log A_{p_{k_m}})}{md_m}
&=\left(1-\frac{k_m}{m}\right)
\bigl(-k_m\log A_{p_{k_m}}\bigr)\frac{1}{k_md_m}.
\end{align*}
{Here the first factor tends to $1$, Lemma~\ref{lem:Khinchine-asymptotics}
gives $-k_m\log A_{p_{k_m}}\to\vartheta$, and
$k_m d_m=(k_m\varepsilon_m)(d_m/\varepsilon_m)\to2$.  Therefore the last
display tends to $\vartheta/2$, which proves the upper bound and completes
the proof.}
\end{proof}

\subsection{Fixed diameter}\label{subsec:proof-main-B}

\begin{proof}[Proof of Theorem~\ref{thm:main-B-fixed}]
{Put
$\mathbf q_d=(2-d,2,\ldots,2)$ and $t=d/(2-d)\in[0,1]$.  Since
\[
 \frac{1}{2-d}-\frac{1}{2}=\frac{d}{2(2-d)}=\frac{t}{2},
 \qquad\text{equivalently}\qquad
 \frac{1}{2-d}=\frac{1-t}{2}+t,
\]
interpolation between $(2,\ldots,2)$, whose optimal constant is $1$, and
$(1,2,\ldots,2)$, whose optimal constant is $2^{(m-1)/2}$, yields
\begin{equation}\label{eq:fixed-diameter-upper-candidate}
 C_{\mathbf q_d}^{(m)}
 \le \bigl(2^{(m-1)/2}\bigr)^t
 =2^{\frac{(m-1)d}{2(2-d)}}.
\end{equation}
On the other hand, every $\mathbf q\in\mathcal B_m$ with
$d(\mathbf q)=d$ satisfies, by Lemma~\ref{lem:diameter-lower},
\begin{equation}\label{eq:fixed-diameter-lower-all}
 C_{\mathbf q}^{(m)}
 \ge 2^{\frac{(m-1)d(\mathbf q)}{2(2-d(\mathbf q))}}
 =2^{\frac{(m-1)d}{2(2-d)}}.
\end{equation}
Applying \eqref{eq:fixed-diameter-lower-all} to $\mathbf q_d$ and comparing
with \eqref{eq:fixed-diameter-upper-candidate} gives
\[
 C_{\mathbf q_d}^{(m)}=2^{\frac{(m-1)d}{2(2-d)}}.
\]
Hence the minimum in \eqref{eq:intro-fixed-diameter-envelope} has exactly this
value.  \textcolor{black}{Proposition~\ref{prop:known-mixed-norm-tools}(2)
shows that every permutation $\mathbf t$ of $\mathbf q_d$ satisfies
$C_{\mathbf t}^{(m)}\le C_{\mathbf q_d}^{(m)}$.  Since
\eqref{eq:fixed-diameter-lower-all} gives the reverse inequality, every such
permutation has the same optimal constant and is therefore a minimizer.}}

Conversely, suppose equality holds in \eqref{eq:intro-fixed-diameter-envelope}.
{Keeping the intermediate estimate from the proof of
Lemma~\ref{lem:diameter-lower}, we have the full chain
\[
 C_{\mathbf q}^{(m)}
 \ge 2^{\,1-m+\sum_i1/q_i+(m-2)/q_{\min}}
 \ge 2^{(m-1)(1/q_{\min}-1/2)}
 \ge 2^{\frac{(m-1)d}{2(2-d)}}.
\]
Because we are assuming that $\mathbf q$ attains the minimum in \eqref{eq:intro-fixed-diameter-envelope}, the first and last quantities in this chain are equal.  Since the endpoints of this chain coincide, equality must hold at every intermediate step.  If
$q_{i_0}=q_{\min}$, equality in the middle inequality is exactly equality in
\[
 \sum_{i\ne i_0}\frac{1}{q_i}\ge\frac{m-1}{2}.
\]
Since every $q_i\le2$, each summand on the left is at least $1/2$; hence
$q_i=2$ for every $i\ne i_0$.  Consequently $q_{\max}=2$, and
$d=q_{\max}-q_{\min}$ gives $q_{\min}=2-d$.} Hence $\mathbf q$ is a
permutation of $(2-d,2,\ldots,2)$, proving uniqueness up to permutation.

{
For $\mathbf q^{(m)}=(2-d_m,2,\ldots,2)$, the formula just proved gives
\begin{equation}\label{eq:one-coordinate-exact}
 C_{\mathbf q^{(m)}}^{(m)}=2^{\frac{(m-1)d_m}{2(2-d_m)}}.
\end{equation}
Therefore, if $d_m>0$, $d_m\to0$, and $m d_m/\log m\to\infty$, then
\[
 \frac{\log C_{\mathbf q^{(m)}}^{(m)}}{m d_m}
 =\frac{m-1}{m}\frac{\log2}{2(2-d_m)}
 \longrightarrow\frac{\log2}{4},
\]
which proves the optimality assertion.
}
\end{proof}

\section{Exact constants}\label{sec:exact-constants}

If $s=\sigma(\mathbf q)>0$ and
\[
 a=y_1^\downarrow(\mathbf q),\qquad b=y_m^\downarrow(\mathbf q),
\]
then the reconstruction formula gives
\begin{equation}\label{eq:diameter-radial-profile}
 d(\mathbf q)=\frac{2s(a-b)}{(1+sb)(1+sa)}.
\end{equation}

\begin{lemma}\label{lem:universal-profile-lower}
If $\sigma(\mathbf q)=0$, then $C_{\mathbf q}^{(m)}=1$. If $\sigma(\mathbf q)>0$, then
\begin{equation}\label{eq:universal-profile-lower}
 C_{\mathbf q}^{(m)}
 \ge 2^{\frac{\sigma(\mathbf q)}{2}\left(1+(m-2)Y_1(\mathbf q)\right)}.
\end{equation}
\end{lemma}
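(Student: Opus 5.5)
The plan is to handle the two cases separately. For the first I use the monotone Khinchine extension of Proposition~\ref{prop:known-mixed-norm-tools}(1). For the second, the universal lower estimate \eqref{eq:universal-lower} (quoted in the proof of Lemma~\ref{lem:diameter-lower}) should give the inequality exactly, once it is rewritten in reciprocal-deficit coordinates.

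\emph{Case $\sigma(\mathbf q)=0$.} Since every $x_i(\mathbf q)\ge0$, the condition $\sigma(\mathbf q)=0$ forces $x_i=0$ for all $i$, so $\mathbf q=(2,\ldots,2)$.
\begin{itemize}
\item Lower bound: testing on $T(x^{(1)},\ldots,x^{(m)})=x^{(1)}_1\cdots x^{(m)}_1$ gives $C_{\mathbf q}^{(m)}\ge1$.
\item Upper bound, base case: for a linear form $T$ on $c_0$, the $\ell_2$ norm of its coefficients is at most their $\ell_1$ norm, which equals $\|T\|$. So the one-linear constant at exponent $2$ equals $1$.
\item Upper bound, induction: since $A_2=1$, applying Proposition~\ref{prop:known-mixed-norm-tools}(1) repeatedly gives $C_{(2,\ldots,2)}^{(m)}\le1$.
\end{itemize}
The proposition is stated for $m\ge2$ inputs, but the one-coordinate Khinchine extension it describes works equally from $m=1$. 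Alternatively, I would argue directly: apply the Khinchine inequality with $p=2$ (orthogonality of Rademachers) in each variable, and bound the resulting averages by $\|T\|$.

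\emph{Case $\sigma(\mathbf q)>0$.} I start from
\[
 C_{\mathbf q}^{(m)}\ge 2^{\,1-m+\sum_i 1/q_i+(m-2)/q_{\min}}
\]
and substitute reciprocal-deficit coordinates.
\begin{itemize}
\item By \eqref{eq:intro-deficit}, $1/q_i=\tfrac12+\tfrac{x_i}{2}$, so $\sum_i 1/q_i=\tfrac m2+\tfrac{\sigma}{2}$.
\item The smallest exponent carries the largest deficit, so $1/q_{\min}=\tfrac12+\tfrac12\max_i x_i=\tfrac12+\tfrac{\sigma Y_1(\mathbf q)}{2}$, using $Y_1=y_1^\downarrow=\max_i x_i/\sigma$.
\item The exponent then becomes
\[
1-m+\tfrac m2+\tfrac\sigma2+\tfrac{m-2}{2}+\tfrac{(m-2)\sigma Y_1}{2}=\tfrac{\sigma}{2}\bigl(1+(m-2)Y_1\bigr),
\]
which is \eqref{eq:universal-profile-lower}.
\end{itemize}

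There is no real obstacle here: the content lies in the cited universal estimate. The only point needing care is that the passage from $q_{\min}$ to $Y_1$ uses that $t\mapsto1/t$ is decreasing.
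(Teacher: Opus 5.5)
Your proof is correct. For $\sigma(\mathbf q)>0$ it is exactly the paper's argument: substitute $\sum_i 1/q_i=\frac m2+\frac{\sigma}{2}$ and $1/q_{\min}=\frac12+\frac{\sigma Y_1(\mathbf q)}{2}$ into \eqref{eq:universal-lower}. For $\sigma(\mathbf q)=0$ the paper simply invokes the known fact that the Hilbertian point $(2,\ldots,2)$ has constant $1$, and your test form together with the orthogonality (equivalently, $A_2=1$ Khinchine extension) argument supplies a valid verification of that fact.
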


\begin{proof}
Put $s=\sigma(\mathbf q)>0$. Then
\[
 \sum_{i=1}^m\frac{1}{q_i}=\frac{m}{2}+\frac{s}{2},
 \qquad
 \frac{1}{q_{\min}}=\frac{1}{2}+\frac{sY_1(\mathbf q)}{2}.
\]
Substitution in \eqref{eq:universal-lower} gives \eqref{eq:universal-profile-lower}. The case $s=0$ is the Hilbertian point.
\end{proof}

For mixed-norm interpolation \cite{BenedekPanzone1961,ABPS2014}, let
$\boldsymbol r^{(1)},\ldots,\boldsymbol r^{(N)}\in[1,\infty)^m$ and
$\alpha_\nu\ge0$, $\sum_\nu\alpha_\nu=1$, and define $\boldsymbol r$ by
\[
 \frac{1}{r_i}:=\sum_{\nu=1}^N\frac{\alpha_\nu}{r_i^{(\nu)}},
 \qquad 1\le i\le m.
\]
Iterated H\"older gives
\begin{equation}\label{eq:weighted-mixed-interpolation}
 \|a\|_{\boldsymbol r}
 \le \prod_{\nu=1}^N
 \|a\|_{\boldsymbol r^{(\nu)}}^{\alpha_\nu}.
\end{equation}
\subsection{Extension}\label{subsec:propagation}

Block duplication is an anisotropic version of the recursive
construction in \cite{DinizMunozPellegrinoSeoane2014}.

\begin{lemma}\label{lem:duplication}
Let $\mathbf q=(q_1,\ldots,q_m)$ be a Bohnenblust--Hille exponent, fix $i\in\{1,\ldots,m\}$, and put $\mathbf q^+=(q_1,\ldots,q_m,2)$. Then
\begin{equation}
\label{eq:duplication-lower}C_{\mathbf q^+}^{(m+1)} \ge2^{1/q_i-1/2}C_{\mathbf q}^{(m)}.
\end{equation}

\end{lemma}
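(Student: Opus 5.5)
Since the constant is a supremum of ratios (see \eqref{eq:finite-dimensional-sup}), the plan is to exhibit a single $(m+1)$-linear form built from a nearly extremal $m$-linear form. Fix $\eta>0$ and a finite-dimensional $T:\ell_\infty^{N_1}\times\cdots\times\ell_\infty^{N_m}\to\R$ with $\|\widehat T\|_{\mathbf q}\ge(1-\eta)C_{\mathbf q}^{(m)}\|T\|$. Following the recursive construction of \cite{DinizMunozPellegrinoSeoane2014}, I would double the $i$-th variable and introduce a new two-dimensional variable $y=(y_1,y_2)\in\ell_\infty^2$. Write $x^{(i)}=(u,v)\in\ell_\infty^{N_i}\times\ell_\infty^{N_i}=\ell_\infty^{2N_i}$ and set
\[
 S(x^{(1)},\ldots,(u,v),\ldots,x^{(m)},y)
 :=(y_1+y_2)\,T(\ldots,u,\ldots)+(y_1-y_2)\,T(\ldots,v,\ldots).
\]

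\emph{Norm estimate.} Since $|y_1+y_2|+|y_1-y_2|=2\max(|y_1|,|y_2|)\le2$ and each $T$-term is at most $\|T\|$, we get $\|S\|\le2\|T\|$.

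\emph{Coefficient array.} The coefficients of $S$ are $\pm T(e_{j_1},\ldots,e_{j_m})$. Each original coefficient now appears twice in coordinate $i$ (via $u$ and via $v$) and, for each of these, twice in the new last coordinate, all with modulus $|a_{j}|$. The new last coordinate has exponent $2$ and each entry value occurs twice, so the innermost norm contributes a factor $\sqrt2$. The doubling in coordinate $i$ contributes a factor $2^{1/q_i}$, because the index set of coordinate $i$ is duplicated with identical inner norms. The outer norms $q_1,\ldots,q_{i-1}$ are unaffected. Hence
\[
\|\widehat S\|_{\mathbf q^+}=2^{1/q_i}\sqrt2\,\|\widehat T\|_{\mathbf q}.
\]
Dividing by $\|S\|\le 2\|T\|$ gives the ratio $2^{1/q_i-1/2}(1-\eta)C_{\mathbf q}^{(m)}$, and letting $\eta\to0$ yields \eqref{eq:duplication-lower}.

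\emph{Main obstacle.} The delicate step is the mixed-norm bookkeeping when $i<m$. The new variable sits last, so for fixed $(j_1,\ldots,j_m)$ the innermost $\ell_2$ sum is over the two values of $y$, giving exactly $\sqrt2|a_j|$. Then the intermediate sums over coordinates $i+1,\ldots,m$ are computed for a fixed copy ($u$ or $v$). Those inner blocks are identical for the two copies, so summing $q_i$-th powers over $2N_i$ indices doubles the sum and yields the factor $2^{1/q_i}$. I will verify carefully that the inner blocks coincide exactly: for the $u$-copy the coefficient is $T(\ldots)\cdot(\pm1)$ and for the $v$-copy it is $T(\ldots)\cdot(\pm1)$, with the same absolute values. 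The signs are irrelevant to the norm.
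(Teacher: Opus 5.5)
Your proposal is correct and follows essentially the same route as the paper: double the $i$-th variable, attach a new $\ell_\infty^2$ variable with the sign pattern $(y_1+y_2,\,y_1-y_2)$, and track the mixed norm to get the factor $2^{1/2}2^{1/q_i}$. The differences are cosmetic. You prove only $\|S\|\le 2\|T\|$, which is all the lower bound needs, whereas the paper records the equality $\|T\|=2\|S\|$. You also start from a near-extremal form and let $\eta\to0$, where the paper takes suprema in \eqref{eq:finite-dimensional-sup} directly.
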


\begin{proof}
Fix a finite-dimensional $m$-linear form
\[
S:\ell_{\infty}^{N_{1}}\times\cdots\times\ell_{\infty}^{N_{i}} \times
\cdots\times\ell_{\infty}^{N_{m}}\longrightarrow\R.
\]
Write $x^{(i)}=(x^{(i)\prime},x^{(i)\prime\prime})\in\ell_{\infty}^{2N_{i}}$ with $x^{(i)\prime},x^{(i)\prime\prime}\in\ell_{\infty}^{N_{i}}$, and define
\[
T:\ell_{\infty}^{N_{1}}\times\cdots\times\ell_{\infty}^{2N_{i}} \times
\cdots\times\ell_{\infty}^{N_{m}}\times\ell_{\infty}^{2} \longrightarrow
\R
\]
by
\begin{align*}
T(x^{(1)},\ldots,x^{(m)},y) & =(y_{1}+y_{2})S(\ldots,x^{(i)\prime},\ldots)\\
& \quad+(y_{1}-y_{2})S(\ldots,x^{(i)\prime\prime},\ldots).
\end{align*}
For fixed variables other than $y$, put $A=S(\ldots,x^{(i)\prime},\ldots)$ and $B=S(\ldots,x^{(i)\prime\prime},\ldots)$. Since $A,B\in\R$,
\[
\sup_{\|y\|_{\infty}\le1}|y_{1}(A+B)+y_{2}(A-B)| =|A+B|+|A-B|=2\max
\{|A|,|B|\}.
\]
Taking the supremum over the remaining variables gives $\|T\|=2\|S\|$. At the coefficient level, each coefficient of $S$ is duplicated in the $i$-th index and appears with two signs in the last index. After taking the innermost $\ell_{2}$-norm, each original coefficient is replaced by $\sqrt2$ times its modulus in two disjoint $i$-blocks. Homogeneity of the intervening mixed norms and the $i$-th $\ell_{q_i}$-sum therefore give
\[
\|\widehat T\|_{\mathbf q^+}
 =2^{1/2}2^{1/q_i}\|\widehat S\|_{\mathbf q}.
\]
Hence
\[
 \frac{\|\widehat T\|_{\mathbf q^+}}{\|T\|}
 =2^{1/q_i-1/2}\frac{\|\widehat S\|_{\mathbf q}}{\|S\|}.
\]
Taking suprema in \eqref{eq:finite-dimensional-sup} gives \eqref{eq:duplication-lower}.
\end{proof}

\begin{proposition}\label{prop:exact-extension}
Let $\mathbf q=(q_1,\ldots,q_m)$ be a Bohnenblust--Hille exponent and put $p=\min_i q_i$. For every integer $r\ge0$, set
\[
\mathbf q^{[r]}=(q_1,\ldots,q_m,\underbrace{2,\ldots,2}_{r\text{ times}}).
\]
Then
\begin{equation}
\label{eq:extension-two-sided}2^{r(1/p-1/2)}C_{\mathbf q}^{(m)} \le
C_{\mathbf q^{[r]}}^{(m+r)} \le A_p^{-r}C_{\mathbf q}^{(m)}.
\end{equation}
If $p\le p_{\mathrm H}$, then
\begin{equation}
\label{eq:extension-exact}C_{\mathbf q^{[r]}}^{(m+r)} =2^{r(1/p-1/2)}C_{\mathbf q}^{(m)}.
\end{equation}

\end{proposition}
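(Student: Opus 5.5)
The argument is an induction on $r$, with the two one-step estimates already available in the paper. The case $r=0$ is trivial. For the inductive step, write $\mathbf q^{[r+1]}=(\mathbf q^{[r]})^+$. The minimum coordinate of $\mathbf q^{[r]}$ is still $p$, because every appended coordinate equals $2\ge p$. We also need $\mathbf q^{[r]}$ to be a Bohnenblust--Hille exponent for every $r$. Its reciprocal sum is
\[
\sum_i 1/q_i+r/2\le (m+1)/2+r/2=(m+r+1)/2,
\]
so admissibility holds.

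\emph{Upper bound.} Apply Proposition~\ref{prop:known-mixed-norm-tools}(1) to $\mathbf q^{[r]}$, whose minimum coordinate is $p$. This gives
\[
C^{(m+r+1)}_{\mathbf q^{[r+1]}}\le A_p^{-1}C^{(m+r)}_{\mathbf q^{[r]}},
\]
and iterating yields $A_p^{-r}C_{\mathbf q}^{(m)}$.

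\emph{Lower bound.} Apply Lemma~\ref{lem:duplication} to $\mathbf q^{[r]}$, choosing the index $i$ with $q_i=p$; this index lies among the first $m$ coordinates. It gives a gain factor of $2^{1/p-1/2}$ at each step, and iterating gives $2^{r(1/p-1/2)}C_{\mathbf q}^{(m)}$.

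\emph{Exactness.} When $p\le p_{\mathrm H}$, the first branch of Haagerup's formula \eqref{eq:Haagerup-formula} gives $A_p^{-1}=2^{1/p-1/2}$. The two bounds in \eqref{eq:extension-two-sided} then coincide, which proves \eqref{eq:extension-exact}.

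The only delicate point is that Proposition~\ref{prop:known-mixed-norm-tools}(1) is stated with $p$ equal to the minimum of the current vector. We must therefore check that appending $2$'s never changes that minimum, which was done above. Everything else is bookkeeping.
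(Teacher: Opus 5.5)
Your proof is correct and follows the paper's argument essentially verbatim. The lower bound comes from iterating Lemma~\ref{lem:duplication} at an original coordinate carrying $p$, the upper bound from Proposition~\ref{prop:known-mixed-norm-tools}(1) with the unchanged minimum $p$, and exactness from Haagerup's first branch; your explicit check that each $\mathbf q^{[r]}$ stays admissible is a detail the paper leaves implicit.
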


\begin{proof}
Choose an original coordinate $i$ with $q_i=p$. Appending a coordinate with exponent $2$ leaves the minimum exponent equal to $p$. Lemma~\ref{lem:duplication}, applied at the same original coordinate $i$, therefore gives after one extension
\[
 C_{\mathbf q^{[1]}}^{(m+1)}
 \ge 2^{1/p-1/2}C_{\mathbf q}^{(m)}.
\]
The coordinate carrying the exponent $p$ is still present after the extension, so the argument can be repeated. After $r$ steps,
\[
 C_{\mathbf q^{[r]}}^{(m+r)}
 \ge 2^{r(1/p-1/2)}C_{\mathbf q}^{(m)}.
\]

For the reverse inequality, Proposition~\ref{prop:known-mixed-norm-tools}(1) applies at each step with the same lower Khinchine constant $A_p$, again because the minimum exponent remains $p$. Iteration gives
\[
 C_{\mathbf q^{[r]}}^{(m+r)}
 \le A_p^{-r}C_{\mathbf q}^{(m)}.
\]
These are the two inequalities in \eqref{eq:extension-two-sided}. If
$p\le p_{\mathrm H}$, the first branch of Haagerup's formula gives
$A_p^{-1}=2^{1/p-1/2}$, so the bounds agree at every step and yield
\eqref{eq:extension-exact}.
\end{proof}

\subsection{Two exceptional coordinates}\label{subsec:two-coordinate-data}

For $1\le i<j\le m$ and $a,b\in[1,2]$, let $\mathbf q_{i,j}^{(m)}(a,b)$ be the vector with $q_i=a$, $q_j=b$ and all other coordinates equal to $2$.

\begin{proposition}\label{prop:two-coordinate-exact}
Let $m\ge2$, let $1\le i<j\le m$, and let $a,b\in[1,2]$. Assume
\begin{equation}\label{eq:two-coordinate-admissibility}
 \frac{1}a+\frac{1}b\le\frac{3}{2}.
\end{equation}
Then
\begin{equation}\label{eq:two-coordinate-exact}
 C_{\mathbf q_{i,j}^{(m)}(a,b)}^{(m)}
 =2^{\frac{1}a+\frac{1}b-1+(m-2)\left(\frac{1}{\min\{a,b\}}-\frac{1}{2}\right)}.
\end{equation}
\end{proposition}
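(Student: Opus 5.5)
The lower bound is immediate from the universal estimate \eqref{eq:universal-lower} (equivalently Lemma~\ref{lem:universal-profile-lower}). For $\mathbf q=\mathbf q_{i,j}^{(m)}(a,b)$ we have $\sum_i 1/q_i=1/a+1/b+(m-2)/2$ and $q_{\min}=\min\{a,b\}$. Substituting, the exponent $1-m+\sum_i1/q_i+(m-2)/q_{\min}$ becomes $1/a+1/b-1+(m-2)(1/\min\{a,b\}-1/2)$, which is exactly the exponent in \eqref{eq:two-coordinate-exact}. So the whole task is the matching upper bound.

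For the upper bound I plan to reduce to a bilinear core and then extend. By symmetry, assume $a\le b$.

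\textbf{Step 1: reduce to the order $(a,b,2,\ldots,2)$.} By Proposition~\ref{prop:known-mixed-norm-tools}(2), adjacent transpositions that move a smaller exponent leftward do not decrease the constant. Moving $a$ to position $1$, $b$ to position $2$, and all the $2$'s to the right therefore gives
\[
C_{\mathbf q_{i,j}^{(m)}(a,b)}^{(m)}\le C^{(m)}_{(a,b,2,\ldots,2)}.
\]
It thus suffices to bound the latter vector. Since the lower bound holds for every ordering, equality will then transfer back to all positions $i<j$.

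\textbf{Step 2: the bilinear constant.} I need $C^{(2)}_{(a,b)}\le 2^{1/a+1/b-1}$ whenever $1/a+1/b\le 3/2$. I would get this by interpolation \eqref{eq:weighted-mixed-interpolation} among the vectors $(2,2)$, $(1,2)$, $(2,1)$, and possibly $(4/3,4/3)$, whose constants are $1$, $\sqrt2$, $\sqrt2$, $\sqrt2$ respectively (the last is the real bilinear Bohnenblust--Hille constant). In reciprocal-deficit coordinates $x=(x_1,x_2)$ the target exponent is $(x_1+x_2)/2$. This is linear, and it takes the value $0$ at $(0,0)$ and $1/2$ at $(1,0)$, $(0,1)$, $(1/2,1/2)$. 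The admissible region $x_1+x_2\le1$ is the triangle with vertices $(0,0)$, $(1,0)$, $(0,1)$, so interpolating over that triangle gives exactly $2^{(x_1+x_2)/2}=2^{1/a+1/b-1}$. Equivalently, Theorem~\ref{thm:main-D}(i)'s mechanism applies to $m=2$. Note also that $(2,1)$ has constant $\sqrt2$, either by the ordering remark or directly via Minkowski; I would double-check this endpoint.

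\textbf{Step 3: extension to $m$ variables.} Apply Proposition~\ref{prop:exact-extension} with $p=a$ and $r=m-2$. When $a\le p_{\mathrm H}$, the formula \eqref{eq:extension-exact} multiplies by exactly $2^{(m-2)(1/a-1/2)}$, which finishes this case.

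\textbf{Main obstacle: $a>p_{\mathrm H}$.} Here Khinchine only gives the factor $A_a^{-1}>2^{1/a-1/2}$ per added variable, so the simple extension fails. Instead I would interpolate in the full $m$-variable space. Write $(a,b,2,\ldots,2)$ in reciprocal coordinates as $(x_1,x_2,0,\ldots,0)$ with $x_1\ge x_2$, and express it as a convex combination of three vertices: the origin (constant $1$); $(1,0,0,\ldots)$, i.e.\ $(1,2,\ldots,2)$, with constant $2^{(m-1)/2}$; and $(\tfrac12,\tfrac12,0,\ldots)$, i.e.\ $(4/3,4/3,2,\ldots,2)$, whose exact constant is $2^{1/2+(m-2)/4}$ by the small-$p$ case above. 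Writing the point as $\alpha(1,0)+\beta(\tfrac12,\tfrac12)$ gives $\beta=2x_2$ and $\alpha=x_1-x_2$. The interpolated exponent is then
\[
\alpha\frac{m-1}{2}+\beta\Bigl(\frac12+\frac{m-2}{4}\Bigr)=\frac{x_1+x_2}{2}+\frac{m-2}{2}x_1,
\]
which is exactly the target, since $1/a-1/2=x_1/2$. This needs $\alpha+\beta=x_1+x_2\le1$, which is precisely condition \eqref{eq:two-coordinate-admissibility}.

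This interpolation argument also covers the case $a\le p_{\mathrm H}$ uniformly, so it makes Step 3 unnecessary. It does require the endpoint value $C_{(4/3,4/3,2,\ldots)}=2^{1/2+(m-2)/4}$, which follows from Step 2 combined with \eqref{eq:extension-exact}, since $4/3<p_{\mathrm H}$. Verifying that the interpolation \eqref{eq:weighted-mixed-interpolation} respects the fixed coordinate order in the mixed norms is the point I would check most carefully.
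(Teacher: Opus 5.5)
Your proof is correct, but it reaches the upper bound through a different decomposition from the paper's. The paper first rescales the deficit radially onto the critical slice, which produces $(a_*,b_*)$ with $1/a_*+1/b_*=3/2$. It reads off $C^{(2)}_{(a_*,b_*)}=\sqrt2$ from the sharp bilinear formula \cite[Theorem~6.3]{ABPS2014}. It then notes that $\min\{a_*,b_*\}\le4/3<p_{\mathrm H}$, so Proposition~\ref{prop:exact-extension} is exact at that point. Finally it moves the two coordinates into positions $i<j$ and performs a single two-point interpolation between $(2,\ldots,2)$ and this critical vector, with weight $\rho=\sigma(\mathbf q)$.

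You instead rearrange first and interpolate among three \emph{fixed} vertices, $(2,\ldots,2)$, $(1,2,\ldots,2)$ and $(4/3,4/3,2,\ldots,2)$, with weights $1-x_1-x_2$, $x_1-x_2$ and $2x_2$. This is exactly the nested level-set decomposition the paper later uses in the proof of Theorem~\ref{thm:main-E}, restricted to the levels $j=1,2$. The device for your ``main obstacle'' is the same in both routes: only apply the Khinchine extension at a minimal exponent below $p_{\mathrm H}$, and let interpolation do the rest. Indeed, the paper's critical point lies on the segment joining your two critical vertices.

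What your route buys is that it never needs the sharp bilinear constant at a general critical pair. The endpoint values $C^{(2)}_{(1,2)}=\sqrt2$ and $C^{(2)}_{(2,1)}\le\sqrt2$ already give $C^{(2)}_{(4/3,4/3)}\le\sqrt2$, so in effect you trade \cite[Theorem~6.3]{ABPS2014} for the mixed $(\ell_1,\ell_2)$ constant of \cite{PellegrinoJNT2016}. The paper's route is shorter once \cite{ABPS2014} is granted. Your Step~3 is, as you say, redundant.

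One wording fix: $C^{(m)}_{\mathbf q}$ is not literally symmetric under swapping $a$ and $b$, because the mixed norm remembers order. Your reduction to $a\le b$ is really justified by Proposition~\ref{prop:known-mixed-norm-tools}(2), which also lets the smaller of $a,b$ cross the larger leftward, together with the symmetry of both the target formula and the universal lower bound.
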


\begin{proof}
If $a=b=2$, then $\mathbf q_{i,j}^{(m)}(a,b)=(2,\ldots,2)$ and both sides of \eqref{eq:two-coordinate-exact} are equal to $1$. Otherwise set
\[
 \rho:=2\left(\frac{1}a+\frac{1}b-1\right)
 =\sigma\!\left(\mathbf q_{i,j}^{(m)}(a,b)\right)\in(0,1]
\]
and define $a_*,b_*\in[1,2]$ by
\[
 \frac{1}{a_*}=\frac{1}{2}+\frac{1}{\rho}\left(\frac{1}a-\frac{1}{2}\right),
 \qquad
 \frac{1}{b_*}=\frac{1}{2}+\frac{1}{\rho}\left(\frac{1}b-\frac{1}{2}\right).
\]
Each numerator is nonnegative and no larger than $\rho/2$, so
$1/2\le1/a_*,1/b_*\le1$; in particular $a_*,b_*\in[1,2]$.
Then $1/a_*+1/b_*=3/2$. Since at least one of $a_*,b_*$ is at most
$4/3$, we have $p_*:=\min\{a_*,b_*\}\le4/3<p_{\mathrm H}$.
The sharp bilinear formula \cite[Theorem~6.3]{ABPS2014} therefore gives
\[
 C_{(a_*,b_*)}^{(2)}=2^{1/a_*+1/b_*-1}=\sqrt2.
\]
Append $m-2$ coordinates equal to $2$. Since the minimum remains
$p_*<p_{\mathrm H}$, the exact part of Proposition~\ref{prop:exact-extension} gives
\begin{equation}\label{eq:two-coordinate-critical-first}
 C_{(a_*,b_*,2,\ldots,2)}^{(m)}
 =2^{(m-2)(1/p_*-1/2)}\sqrt2.
\end{equation}
\textcolor{black}{Now place the two exceptional coordinates in the prescribed
positions $i<j$, without changing their left-to-right order. Starting from
$(a_*,b_*,2,\ldots,2)$, this is achieved by repeatedly moving coordinates
equal to $2$ to the left across one of $a_*$ or $b_*$. For every
$c\in[1,2]$, Proposition~\ref{prop:known-mixed-norm-tools}(2) gives}
\[
 \textcolor{black}{C_{(\ldots,2,c,\ldots)}^{(m)}
 \le C_{(\ldots,c,2,\ldots)}^{(m)}.}
\]
\textcolor{black}{Hence none of these moves increases the constant, and
\eqref{eq:two-coordinate-critical-first} gives}
\[
 \textcolor{black}{C_{\mathbf q_{i,j}^{(m)}(a_*,b_*)}^{(m)}
 \le 2^{1/a_*+1/b_*-1+(m-2)(1/p_*-1/2)}.}
\]
\textcolor{black}{The universal lower estimate \eqref{eq:universal-lower}
gives the reverse inequality, since its exponent depends only on the
reciprocal sum and the minimum coordinate. Thus equality holds.}

Finally, by construction,
\[
 \frac{1}a=\frac{1}{2}+\rho\left(\frac{1}{a_*}-\frac{1}{2}\right),\qquad
 \frac{1}b=\frac{1}{2}+\rho\left(\frac{1}{b_*}-\frac{1}{2}\right),
\]
while all remaining coordinates stay equal to $2$. Therefore \eqref{eq:weighted-mixed-interpolation} gives
{
\begin{align*}
 \log_2 C_{\mathbf q_{i,j}^{(m)}(a,b)}^{(m)}
 &\le \rho\left[
 \frac{1}{a_*}+\frac{1}{b_*}-1
 +(m-2)\left(\frac{1}{p_*}-\frac{1}{2}\right)\right]\\
 &=\frac{1}a+\frac{1}b-1
 +(m-2)\left(\frac{1}{\min\{a,b\}}-\frac{1}{2}\right).
\end{align*}}
Indeed, $1/a_*+1/b_*=3/2$ and the definition of $\rho$ gives
$\rho/2=1/a+1/b-1$; moreover, since $p_*=\min\{a_*,b_*\}$,
{
\[
 \rho\left(\frac{1}{p_*}-\frac{1}{2}\right)
 =\max\left\{\frac{1}a-\frac{1}{2},\frac{1}b-\frac{1}{2}\right\}
 =\frac{1}{\min\{a,b\}}-\frac{1}{2}.
\]}
On the other hand, the universal lower estimate \eqref{eq:universal-lower}
applied to $\mathbf q_{i,j}^{(m)}(a,b)$ gives exactly the reverse inequality,
{
\[
 \log_2 C_{\mathbf q_{i,j}^{(m)}(a,b)}^{(m)}
 \ge \frac{1}a+\frac{1}b-1
 +(m-2)\left(\frac{1}{\min\{a,b\}}-\frac{1}{2}\right).
\]}
The two bounds coincide, proving \eqref{eq:two-coordinate-exact}.
\end{proof}

\begin{corollary}\label{cor:two-point-profile-exact}
Let $\mathbf q$ be a Bohnenblust--Hille exponent with $\sigma(\mathbf q)>0$ and with at most two coordinates different from $2$. Then
\begin{equation}\label{eq:two-point-profile-exact}
 C_{\mathbf q}^{(m)}
 =2^{\frac{\sigma(\mathbf q)}{2}\left(1+(m-2)Y_1(\mathbf q)\right)}.
\end{equation}
\end{corollary}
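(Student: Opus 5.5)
The corollary reduces to Proposition~\ref{prop:two-coordinate-exact} once the right-hand side of \eqref{eq:two-point-profile-exact} is rewritten in terms of the two exceptional exponents. We split into two cases according to whether $\mathbf q$ has exactly one or exactly two coordinates different from $2$. The case of no such coordinate is excluded by $\sigma(\mathbf q)>0$.

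\emph{Two exceptional coordinates.} Suppose $\mathbf q=\mathbf q_{i,j}^{(m)}(a,b)$ with $i<j$. Admissibility gives
\[
\frac1a+\frac1b+\frac{m-2}{2}\le\frac{m+1}{2},
\]
which is exactly \eqref{eq:two-coordinate-admissibility}. Proposition~\ref{prop:two-coordinate-exact} therefore applies. Translating to profile coordinates:
\begin{itemize}
\item $\sigma(\mathbf q)=2(1/a+1/b-1)$, so $\sigma/2=1/a+1/b-1$;
\item the largest deficit is $x_{\max}=2(1/\min\{a,b\}-1/2)=\sigma Y_1(\mathbf q)$, so $1/\min\{a,b\}-1/2=\sigma Y_1/2$.
\end{itemize}
Substituting both into \eqref{eq:two-coordinate-exact} gives exponent $\frac{\sigma}{2}\bigl(1+(m-2)Y_1\bigr)$.

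\emph{One exceptional coordinate.} Suppose $\mathbf q$ has a single coordinate $a<2$ at position $i$. Then $Y_1=1$ and $\sigma=2(1/a-1/2)\le1$. The target is $2^{(m-1)(1/a-1/2)}$. This can be obtained from Proposition~\ref{prop:two-coordinate-exact} by choosing any second index $j\ne i$ and setting $b=2$ there, ordering the pair $(i,j)$ so that the smaller index comes first. The admissibility $1/a+1/2\le3/2$ holds since $a\ge1$. Formula \eqref{eq:two-coordinate-exact} then gives
\[
2^{1/a-1/2+(m-2)(1/a-1/2)}=2^{(m-1)(1/a-1/2)},
\]
which matches. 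The only point to check is that the proposition's proof does not require $b<2$, and it does not. Alternatively, this case follows from Theorem~\ref{thm:main-B-fixed} with $d=2-a$, together with the permutation statement there.

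No step here is a real obstacle. The one place needing care is verifying that $Y_1$ corresponds to $\min\{a,b\}$, including when $a=b$, where both coordinates share the maximal deficit. The identity $x_{\max}=\sigma Y_1$ covers this case.
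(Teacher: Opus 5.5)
Your proof is correct and follows the paper's argument. You apply Proposition~\ref{prop:two-coordinate-exact} after checking $1/a+1/b\le 3/2$, then substitute $\sigma/2=1/a+1/b-1$ and $\sigma Y_1=\max\{x_a,x_b\}=2(1/\min\{a,b\}-1/2)$. The paper handles the one-coordinate case without a separate split, by allowing one of $a,b$ to equal $2$, which is exactly your first route for that case.
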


\begin{proof}
Choose positions $i<j$ containing all coordinates that may differ from $2$,
and write the corresponding exponents as $a$ and $b$ (allowing one of them
to equal $2$).  Admissibility gives
\[
 \frac{1}a+\frac{1}b\le\frac{3}{2},
\]
so Proposition~\ref{prop:two-coordinate-exact} applies.  Put
\[
 x_a:=2\left(\frac{1}a-\frac{1}{2}\right),
 \qquad
 x_b:=2\left(\frac{1}b-\frac{1}{2}\right).
\]
\[
 \sigma(\mathbf q)=x_a+x_b,
 \qquad
 \sigma(\mathbf q)Y_1(\mathbf q)=\max\{x_a,x_b\}.
\]
Substitution in \eqref{eq:two-coordinate-exact} gives
\eqref{eq:two-point-profile-exact}.
\end{proof}

Let $X_n$ be a normed
space with distinguished vectors $e_1,\ldots,e_n$, and let $\mathcal T_n$
be a normed class of scalar $m$-linear forms on $X_n^m$.  Assume that
permuting the variables preserves both $\mathcal T_n$ and its norm, and put
\[
 \mathcal K(\mathbf q)
 :=\sup_{n\ge1}\ \sup_{0\ne T\in\mathcal T_n}
 \frac{\|\bigl(T(e_{j_1},\ldots,e_{j_m})\bigr)\|_{\mathbf q}}{\|T\|}
\]
whenever this quantity is finite.

\begin{proposition}
\label{prop:abstract-profile-principle}
\textcolor{black}{On every convex permutation-invariant set
$U\subset(0,1]^m$ of reciprocal exponent vectors for which
$0<\mathcal K(\alpha_1^{-1},\ldots,\alpha_m^{-1})<\infty$ for every
$\boldsymbol\alpha\in U$, the function}
\[
 \textcolor{black}{\Phi(\boldsymbol\alpha)
 :=\log\mathcal K(\alpha_1^{-1},\ldots,\alpha_m^{-1})
 }
\]
\textcolor{black}{is convex. Consequently,}
\[
 \textcolor{black}{\widehat\Phi(\boldsymbol\alpha)
 :=\max_{\pi\in\mathfrak S_m}
 \Phi(\alpha_{\pi(1)},\ldots,\alpha_{\pi(m)})}
\]
\textcolor{black}{is symmetric, convex, and Schur-convex on $U$. Moreover,}
\[
 \textcolor{black}{\widehat\Phi(\boldsymbol\alpha)
 =\Phi(\boldsymbol\alpha^\downarrow).}
\]
\end{proposition}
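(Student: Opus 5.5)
The plan has three steps: log-convexity of $\mathcal K$ in the reciprocal exponents via mixed-norm interpolation; the convex-analysis consequences for the symmetrization $\widehat\Phi$; and identification of the maximizing permutation through the adjacent-transposition argument of Proposition~\ref{prop:known-mixed-norm-tools}(2).

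\emph{Convexity of $\Phi$.} Fix $n$, a nonzero $T\in\mathcal T_n$, and write $a$ for its coefficient array. Let $\boldsymbol\alpha,\boldsymbol\beta\in U$ and $\lambda\in[0,1]$. The reciprocal vector $\lambda\boldsymbol\alpha+(1-\lambda)\boldsymbol\beta$ lies in $U$ by convexity. Apply \eqref{eq:weighted-mixed-interpolation} with $N=2$, weights $\lambda,1-\lambda$, and $\boldsymbol r^{(1)}=\boldsymbol\alpha^{-1}$, $\boldsymbol r^{(2)}=\boldsymbol\beta^{-1}$ (entries in $[1,\infty)$ since $\alpha_i,\beta_i\in(0,1]$). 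This gives
\[
 \log\frac{\|a\|_{(\lambda\boldsymbol\alpha+(1-\lambda)\boldsymbol\beta)^{-1}}}{\|T\|}
 \le\lambda\log\frac{\|a\|_{\boldsymbol\alpha^{-1}}}{\|T\|}
 +(1-\lambda)\log\frac{\|a\|_{\boldsymbol\beta^{-1}}}{\|T\|}
 \le\lambda\Phi(\boldsymbol\alpha)+(1-\lambda)\Phi(\boldsymbol\beta).
\]
Taking the supremum over $n$ and $T$ yields $\Phi(\lambda\boldsymbol\alpha+(1-\lambda)\boldsymbol\beta)\le\lambda\Phi(\boldsymbol\alpha)+(1-\lambda)\Phi(\boldsymbol\beta)$. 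The hypothesis $0<\mathcal K<\infty$ on $U$ is used only to ensure that $\Phi$ is a finite real-valued function, so no $\pm\infty$ arithmetic arises. For general finitely or infinitely supported arrays, one uses finite truncations, consistent with the definition of the mixed norm as a supremum over truncations.

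\emph{Properties of $\widehat\Phi$.} Since $U$ is permutation invariant, each $\boldsymbol\alpha\mapsto\Phi(\pi\boldsymbol\alpha)$ is convex on $U$, being $\Phi$ composed with a linear map preserving $U$. Hence $\widehat\Phi$, a finite maximum of convex functions, is convex. It is symmetric by construction, since precomposing with a permutation only reindexes the maximum. A symmetric convex function is Schur-convex. Concretely, if $\boldsymbol\beta\prec\boldsymbol\alpha$, then by Rado's theorem (Hardy--Littlewood--P\'olya) $\boldsymbol\beta=\sum_\pi c_\pi\,\pi\boldsymbol\alpha$ is a convex combination of permutations of $\boldsymbol\alpha$. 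All of these lie in $U$, so convexity and symmetry give $\widehat\Phi(\boldsymbol\beta)\le\sum_\pi c_\pi\widehat\Phi(\pi\boldsymbol\alpha)=\widehat\Phi(\boldsymbol\alpha)$.

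\emph{Identifying the maximizer.} It remains to show $\widehat\Phi(\boldsymbol\alpha)=\Phi(\boldsymbol\alpha^\downarrow)$, i.e.\ that the nonincreasing arrangement of reciprocals (nondecreasing exponents) is the worst order. I would redo the argument of Proposition~\ref{prop:known-mixed-norm-tools}(2) in the abstract class. Suppose adjacent positions $i,i+1$ carry reciprocals $(\alpha_i,\alpha_{i+1})$ with $\alpha_i<\alpha_{i+1}$, so the exponents are $(q,p)$ with $p<q$. Minkowski's integral inequality (exchange of an $\ell_q$ and an $\ell_p$ sum, $p\le q$) gives $\|a\|_{(\ldots,q,p,\ldots)}\le\|a^{(i\,i+1)}\|_{(\ldots,p,q,\ldots)}$, where $a^{(i\,i+1)}$ is the coefficient array of the form $T'$ obtained from $T$ by swapping variables $i$ and $i+1$. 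By assumption $T'\in\mathcal T_n$ and $\|T'\|=\|T\|$. Taking suprema shows that swapping an adjacent ascending pair of reciprocals into descending order does not decrease $\Phi$. Bubble-sorting any $\pi\boldsymbol\alpha$ to $\boldsymbol\alpha^\downarrow$ by such swaps gives $\Phi(\pi\boldsymbol\alpha)\le\Phi(\boldsymbol\alpha^\downarrow)$, and hence $\widehat\Phi(\boldsymbol\alpha)=\Phi(\boldsymbol\alpha^\downarrow)$.

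The main obstacle is this last step. One must state the Minkowski exchange carefully inside the iterated mixed norm: the outer and inner norms are untouched, and homogeneity lets one apply the two-index inequality fiberwise. One must also confirm that it is exactly the hypothesis of permutation invariance of $\mathcal T_n$ and its norm that allows the swapped array to be realized by an admissible form of the same norm.
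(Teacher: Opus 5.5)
Your proposal is correct and follows the paper's proof step for step. You get convexity of $\Phi$ from the iterated H\"older interpolation \eqref{eq:weighted-mixed-interpolation} applied to each coefficient array before taking suprema. Schur-convexity of the symmetrized maximum follows because it is symmetric and convex; you verify this directly via Rado's theorem where the paper cites Marshall--Olkin--Arnold. The maximizing order comes from adjacent Minkowski exchanges, each realized by the variable-swapped form, which has the same norm.

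One small correction: applying the two-index Minkowski inequality fiberwise inside the iterated norm relies on monotonicity (the lattice property) of the outer mixed norms, not on homogeneity.
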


\begin{proof}
For $\boldsymbol\alpha,\boldsymbol\beta\in U$ and $0\le t\le1$, iterated
H\"older gives
\[
 \|a\|_{((1-t)\boldsymbol\alpha+t\boldsymbol\beta)^{-1}}
 \le
 \|a\|_{\boldsymbol\alpha^{-1}}^{\,1-t}
 \|a\|_{\boldsymbol\beta^{-1}}^{\,t}.
\]
\textcolor{black}{Apply this to the coefficient array of $T$, divide by
$\|T\|$, and take suprema. Thus $\Phi$ is convex. The maximum of its
coordinate permutations is symmetric and convex, hence Schur-convex
\cite[Chapter~3]{MarshallOlkinArnold2011}.}

\textcolor{black}{It remains to identify the maximizing order. Write
$q_i=\alpha_i^{-1}$. If $q_i\ge q_{i+1}$, Minkowski's inequality gives}
\[
 \textcolor{black}{\|a\|_{(\ldots,q_i,q_{i+1},\ldots)}
 \le
 \|a^{(i\,i+1)}\|_{(\ldots,q_{i+1},q_i,\ldots)},}
\]
\textcolor{black}{where the two corresponding indices of $a$ are
interchanged. The array on the right belongs to the form obtained by
permuting the same variables, whose norm is unchanged. Successive adjacent
transpositions place the smallest exponents, or equivalently the largest
reciprocals, in the outermost sums. This proves the last identity.}
\end{proof}

\begin{proof}[Proof of \textcolor{black}{Theorem}~\ref{thm:main-C}]

\textcolor{black}{Apply Proposition~\ref{prop:abstract-profile-principle} to
the class of all real $m$-linear forms on $(\ell_\infty^n)^m$. By zero
extension from rectangular coordinate blocks, the resulting
$\mathcal K(\mathbf q)$ is $C_{\mathbf q}^{(m)}$. On the fixed-$s$ slice,}
\[
 \textcolor{black}{\frac{1}{Q_i(s,z)}=\frac{1}{2}+\frac{s z_i}{2}}
\]
\textcolor{black}{depends affinely on $z$. The proposition therefore gives
the asserted convexity and Schur-convexity, as well as
\eqref{eq:intro-canonical-order} and
\eqref{eq:intro-actual-majorization}. Finally,}
\[
 \textcolor{black}{(1/m,\ldots,1/m)\prec z\prec(1,0,\ldots,0).}
\]
\textcolor{black}{The concentrated profile has constant $2^{(m-1)s/2}$ by
the exact one-coordinate formula. Hence
\eqref{eq:intro-profile-extremes} follows.}
\end{proof}

\subsection{Majorization}\label{subsec:majorization-exactness}

\begin{proposition}\label{prop:two-point-majorant}
Let $\mathbf q$ be a Bohnenblust--Hille exponent with
$s:=\sigma(\mathbf q)>0$.  If
\[
 \frac{1}{2}\le a\le1,
 \qquad Y_1(\mathbf q)\le a,
\]
then
\begin{equation}\label{eq:two-point-majorant}
 C_{\mathbf q}^{(m)}
 \le 2^{\frac{s}{2}(1+(m-2)a)}.
\end{equation}
\end{proposition}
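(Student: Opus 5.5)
Set $y=y(\mathbf q)\in\Delta_{m-1}$ and $s=\sigma(\mathbf q)$. The plan is to show that $y$ is majorized by a two-point profile $w$ with largest entry $a$, and then to combine the Schur-convexity of Theorem~\ref{thm:main-C} with the exact two-coordinate formula of Corollary~\ref{cor:two-point-profile-exact}. Take $w=(a,1-a,0,\ldots,0)$. Since $a\ge1/2$, we have $w^\downarrow=w$, so $X_1(w)=a$ and $X_r(w)=1$ for $r\ge2$. On the other side, $X_1(y)=Y_1(\mathbf q)\le a$, and trivially $X_r(y)\le1$. Hence $y\prec w$.

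Next, Theorem~\ref{thm:main-C} gives
\[
 C_{\mathbf q}^{(m)}\le\mathfrak C_{m,s}(y)\le\mathfrak C_{m,s}(w)=C_{\mathbf Q_m(s,w)}^{(m)}.
\]
The first inequality holds because $\mathbf q=\mathbf Q_m(s,y)$ (by the reconstruction formula \eqref{eq:intro-profile-reconstruction}) is one of the permutations entering the maximum. The final equality is \eqref{eq:intro-canonical-order}, using $w=w^\downarrow$.

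Finally, $\mathbf Q_m(s,w)$ has at most two coordinates different from $2$, total deficit $s$, and $Y_1=a$. It is admissible because $s\le1$. Corollary~\ref{cor:two-point-profile-exact} then gives $C_{\mathbf Q_m(s,w)}^{(m)}=2^{\frac s2(1+(m-2)a)}$, which is \eqref{eq:two-point-majorant}.

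The only delicate point is making sure that Theorem~\ref{thm:main-C} covers the profile $w$ with zero entries and that the ordering/permutation issue is handled. Both are already built into the definition of $\mathfrak C_{m,s}$ on the closed simplex and into \eqref{eq:intro-canonical-order}, so I expect no real obstacle. The hypothesis $a\ge1/2$ is used exactly once, to guarantee that $a$ is the largest entry of $w$.
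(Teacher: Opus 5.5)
Your proof is correct and is essentially the paper's argument: you use the same majorant $w=(a,1-a,0,\ldots,0)$, the same majorization check $y(\mathbf q)\prec w$, and the same exact evaluation via Corollary~\ref{cor:two-point-profile-exact}. The only difference is packaging. The paper writes $y(\mathbf q)$ as a convex combination of permutations of $w$ by Rado's theorem and interpolates directly; no ordering step is needed there, because the two-coordinate formula holds in every arrangement. You instead cite the Schur-convexity in Theorem~\ref{thm:main-C} together with \eqref{eq:intro-canonical-order}, which encode exactly that mechanism and are proved before this proposition, so there is no circularity.
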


\begin{proof}
Set $z=(a,1-a,0,\ldots,0)$.  Since $a\ge1/2$, this vector is decreasing.
The first ordered partial sum of $y(\mathbf q)$ is at most $a$, and for
$2\le r<m$ both $X_r(y(\mathbf q))$ and $X_r(z)$ are bounded by $1$, with
$X_r(z)=1$.  Hence $y(\mathbf q)\prec z$.

By \textcolor{black}{Rado's theorem~\cite{Rado1952}}, there are permutations $\pi_\nu$ and weights
$\lambda_\nu\ge0$, $\sum_\nu\lambda_\nu=1$, such that
\[
 y(\mathbf q)=\sum_\nu\lambda_\nu\pi_\nu z.
\]
Define $\mathbf r^{(\nu)}$ by
\[
 \frac{1}{r_i^{(\nu)}}=\frac{1}{2}+\frac{s}{2}(\pi_\nu z)_i.
\]
Each $\mathbf r^{(\nu)}$ has at most two non-Hilbertian coordinates, total
deficit $s$, and largest profile mass $a$.  By
Corollary~\ref{cor:two-point-profile-exact},
\[
 C_{\mathbf r^{(\nu)}}^{(m)}
 =2^{\frac{s}{2}(1+(m-2)a)}.
\]
Equation~\eqref{eq:weighted-mixed-interpolation} gives \eqref{eq:two-point-majorant}.
\end{proof}

\begin{proof}[Proof of Theorem~\ref{thm:main-D}(i)]
For $s=0$ there is nothing to prove.  If $s>0$, apply
Proposition~\ref{prop:two-point-majorant} with $a=1$ to obtain
\[
 C_{\mathbf q}^{(m)}\le2^{(m-1)s/2}
\]
for every exponent vector of total deficit $s$.  The vector
$\mathbf q_s=(2/(1+s),2,\ldots,2)$ has total deficit $s$ and diameter
$2s/(1+s)$.  Hence Theorem~\ref{thm:main-B-fixed} gives
\[
 C_{\mathbf q_s}^{(m)}=2^{(m-1)s/2}.
\]
The same is true of every permutation of $\mathbf q_s$.
\end{proof}

\begin{proof}[Proof of Theorem~\ref{thm:main-D}(ii)]

Since $Y_1(\mathbf q)\ge1/2$, Proposition~\ref{prop:two-point-majorant} applies with $a=Y_1(\mathbf q)$ and gives
\[
 C_{\mathbf q}^{(m)}\le 2^{\frac{\sigma(\mathbf q)}{2}(1+(m-2)Y_1(\mathbf q))}.
\]
Lemma~\ref{lem:universal-profile-lower} gives exactly the reverse inequality, so the two bounds coincide.
\end{proof}

\begin{proposition}\label{prop:global-explicit-profile-corridor}
Let $\mathbf q$ be a Bohnenblust--Hille exponent with $\sigma(\mathbf q)>0$. Then
\[
 2^{\frac{\sigma(\mathbf q)}{2}\left(1+(m-2)Y_1(\mathbf q)\right)}
 \le C_{\mathbf q}^{(m)}
 \le
 2^{\frac{\sigma(\mathbf q)}{2}\left(1+(m-2)\max\{Y_1(\mathbf q),1/2\}\right)}.
\]
In particular, if $Y_1(\mathbf q)<1/2$, then
\[
 2^{\frac{\sigma(\mathbf q)}{2}\left(1+(m-2)Y_1(\mathbf q)\right)}
 \le C_{\mathbf q}^{(m)}
 \le 2^{\sigma(\mathbf q)m/4}.
\]
If $Y_1(\mathbf q)\ge1/2$, the two sides coincide.
\end{proposition}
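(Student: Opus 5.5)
The lower bound is exactly Lemma~\ref{lem:universal-profile-lower}, so only the upper bound needs an argument. For it we invoke Proposition~\ref{prop:two-point-majorant} with the parameter
\[
 a:=\max\{Y_1(\mathbf q),1/2\}.
\]
This choice satisfies both hypotheses of that proposition: $1/2\le a\le1$ because $Y_1(\mathbf q)\le1$ (it is a coordinate of a probability vector), and $Y_1(\mathbf q)\le a$ by construction. The proposition then gives
\[
 C_{\mathbf q}^{(m)}\le 2^{\frac{\sigma(\mathbf q)}{2}\left(1+(m-2)\max\{Y_1(\mathbf q),1/2\}\right)},
\]
which is the claimed two-sided estimate.

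The two special cases are substitutions into this estimate.

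\emph{Case $Y_1(\mathbf q)<1/2$.} Here the maximum equals $1/2$. The exponent becomes
\[
 \frac{\sigma(\mathbf q)}{2}\left(1+\frac{m-2}{2}\right)=\frac{\sigma(\mathbf q)m}{4},
\]
which gives the bound $2^{\sigma(\mathbf q)m/4}$.

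\emph{Case $Y_1(\mathbf q)\ge1/2$.} Here the maximum is $Y_1(\mathbf q)$, so the upper and lower exponents are identical. This recovers Theorem~\ref{thm:main-D}(ii).

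There is no real obstacle. The only point to check is the admissibility of $a=1/2$ in Proposition~\ref{prop:two-point-majorant} when the profile is spread out. In that case $z=(1/2,1/2,0,\ldots,0)$ majorizes every profile whose largest entry is at most $1/2$: its partial sums are $X_1(z)=1/2$ and $X_r(z)=1$ for $r\ge2$, and these dominate the partial sums of any such profile. The proof of that proposition already verifies this.
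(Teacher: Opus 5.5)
Your proposal is correct and follows the paper's proof. The lower bound comes from Lemma~\ref{lem:universal-profile-lower}. The upper bound comes from Proposition~\ref{prop:two-point-majorant} with $a=\max\{Y_1(\mathbf q),1/2\}$, and the two special cases are then direct substitutions into the exponent.
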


\begin{proof}
The lower bound is Lemma~\ref{lem:universal-profile-lower}.  For the upper
bound, apply Proposition~\ref{prop:two-point-majorant} with
\[
 a=\max\left\{Y_1(\mathbf q),\frac{1}{2}\right\}.
\]
If $Y_1(\mathbf q)<1/2$, the exponent becomes
$\sigma(\mathbf q)m/4$; if $Y_1(\mathbf q)\ge1/2$, it agrees with the lower
exponent.
\end{proof}

\begin{proof}[Proof of Theorem~\ref{thm:main-A}(ii)]

The lower estimate follows from \eqref{eq:diameter-lower}:
\[
 \frac{\bigl(\log C_{\mathbf q^{(m)}}^{(m)}\bigr)}{m d_m}
 \ge \left(1-\frac{1}m\right)\frac{\log2}{2(2-d_m)}
 \ge \left(1-\frac{1}m\right)\frac{\log2}{4}.
\]
For the upper estimate, first note a bound valid throughout $\mathcal B_m$.
Fix $\mathbf q\in\mathcal B_m$ and put
\[
 x_i:=2\left(\frac{1}{q_i}-\frac{1}{2}\right),\qquad 1\le i\le m,
 \qquad s:=\sum_{i=1}^m x_i\le1.
\]
For each $i$, let $\mathbf r^{(i)}$ be the exponent vector whose $i$th coordinate is $1$ and whose remaining coordinates are $2$. \textcolor{black}{By the directed rearrangement in Proposition~\ref{prop:known-mixed-norm-tools}(2) and \cite[Theorem~2.1]{PellegrinoJNT2016},}
\[
 C_{\mathbf r^{(i)}}^{(m)}\le 2^{(m-1)/2},
 \qquad 1\le i\le m.
\]
Moreover, coordinate by coordinate,
\[
 \frac{1}{q_j}
 =(1-s)\frac{1}{2}+\sum_{i=1}^m x_i\frac{1}{r_j^{(i)}},
 \qquad 1\le j\le m.
\]
Mixed-norm interpolation, together with
$C_{(2,\ldots,2)}^{(m)}=1$, therefore yields
\[
 C_{\mathbf q}^{(m)}
 \le\prod_{i=1}^m\left(C_{\mathbf r^{(i)}}^{(m)}\right)^{x_i}
 \le 2^{(m-1)s/2}
 \le 2^{(m-1)/2}.
\]
Thus
\begin{equation}\label{eq:global-exponential-upper}
 C_{\mathbf q}^{(m)}\le 2^{(m-1)/2}.
\end{equation}

Set $\varepsilon_m=d_m+2/(m+1)$; then $\varepsilon_m/d_m\to1$.  On the
indices for which $\varepsilon_m\le2/3$, let $k_m$ be given by
\eqref{eq:optimized-k}.  Since
\[
 k_m+1\le\frac{2}{\varepsilon_m}<k_m+2,
\]
\eqref{eq:diameter-upper-exact},
\eqref{eq:classical-polynomial-bound}, and
\eqref{eq:Ap-pk-dyadic-bound} give
\[
 \log C_{\mathbf q^{(m)}}^{(m)}
 \le \frac{(m-k_m)\log2}{k_m+2}
 +\vartheta\log k_m+\log M
 \le \frac{m\varepsilon_m\log2}{2}+O(\log m).
\]
After division by $m d_m$, the error tends to zero and the resulting
limsup is at most $(\log2)/2$.

For $\varepsilon_m>2/3$, apply Proposition~\ref{prop:global-explicit-profile-corridor}. Write
$s_m=\sigma(\mathbf q^{(m)})$ and $a_m=Y_1(\mathbf q^{(m)})$.  If
$a_m<1/2$, then
\[
 \log C_{\mathbf q^{(m)}}^{(m)}
 \le \frac{s_m m\log2}{4}\le\frac{m\log2}{4},
\]
and hence the normalized ratio is at most $3\log2/8+o(1)$.  If
$a_m\ge1/2$, Theorem~\ref{thm:main-D}(ii) gives
\[
 \frac{\log C_{\mathbf q^{(m)}}^{(m)}}{m d_m}
 =\frac{\log2}{2d_m}
 \left(\frac{s_m}{m}+\left(1-\frac{2}m\right)s_ma_m\right).
\]
Now $s_ma_m=2(1/q_{\min}^{(m)}-1/2)$, while admissibility gives
$q_{\max}^{(m)}\ge2m/(m+1)$ and therefore
\[
 q_{\min}^{(m)}=q_{\max}^{(m)}-d_m
 \ge\frac{2m}{m+1}-d_m.
\]
\textcolor{black}{If a subsequence with $\varepsilon_m\le2/3$ realizes the
global limsup, the preceding estimate suffices. Otherwise choose a subsequence
in $\{m:\varepsilon_m>2/3\}$ realizing the global limsup; after a further
extraction, assume $d_m\to d$ for some $d\in[2/3,1]$.}  The preceding
estimate then gives
\[
 \limsup_{m\to\infty}\frac{\log C_{\mathbf q^{(m)}}^{(m)}}{m d_m}
 \le\frac{\log2}{2(2-d)}
 \le\frac{\log2}{2}.
\]
Since the same conclusion holds for every subsequence realizing the limsup, this proves the required global upper bound and hence \eqref{eq:intro-global-growth-law}.  The last coefficient is
sharp: for $\mathbf q^{(m)}=(1,2,\ldots,2)$, the normalized ratio tends to
$(\log2)/2$.
\end{proof}

\section{Profile asymptotics}\label{sec:proof-main-E}

For $1\le k\le m$, set
\[
\mathbf q^{(m,k)}=(\underbrace{p_k,\ldots,p_k}_{k}, \underbrace
{2,\ldots,2}_{m-k}).
\]
Then $C_{m,k}=C_{\mathbf q^{(m,k)}}^{(m)}$ by
\eqref{eq:intro-flat-endpoint-constant}.

The extension estimate gives, for every $m\ge2$ and $1\le k\le m$,
\begin{equation}
\label{eq:Cmk-two-sided}2^{\frac{m-k}{2k}}B_{\R,k}^{\mult} \le
C_{m,k} \le A_{p_k}^{-(m-k)}B_{\R,k}^{\mult}.
\end{equation}
Indeed, for $k=1$, Equation~\eqref{eq:one-coordinate-exact} gives
$C_{m,1}=2^{(m-1)/2}$, which has the asserted form because
$B_{\R,1}^{\mult}=1$.  For $k\ge2$, apply
Proposition~\ref{prop:exact-extension} to the diagonal $k$-linear exponent
$(p_k,\ldots,p_k)$.  Since $1/p_k-1/2=1/(2k)$,
\eqref{eq:Cmk-two-sided} follows.  Finally,
\[
 p_{12}=\frac{24}{13}=1.84615\ldots
 <p_{\mathrm H}=1.847416\ldots
 <\frac{13}{7}=1.85714\ldots=p_{13}.
\]
For $1\le k\le12$, $A_{p_k}^{-1}=2^{1/(2k)}$, and \eqref{eq:Cmk-two-sided} gives
\begin{equation}
\label{eq:Cmk-exact}
 C_{m,k}=2^{\frac{m-k}{2k}}B_{\R,k}^{\mult},\qquad
 C_{m+1,k}=2^{1/(2k)}C_{m,k},\qquad 1\le k\le12.
\end{equation}

\begin{proof}[Proof of Theorem~\ref{thm:main-E}]
Let $(\mathbf q^{(m)})_{m\ge2}$ satisfy its assumptions, and put
\[
 d_m=d(\mathbf q^{(m)}),\qquad
 s_m=\sigma(\mathbf q^{(m)}).
\]
For all sufficiently large $m$, write the ordered normalized profile as
\[
 y_{1,m}\ge y_{2,m}\ge\cdots\ge y_{m,m}\ge0,
 \qquad
 \sum_{j=1}^m y_{j,m}=1,
\]
where $y_{j,m}=y_j^\downarrow(\mathbf q^{(m)})$, and set $y_{m+1,m}=0$.
For $j\ge13$, with $p_j$ as in \eqref{eq:intro-pk-definition}, define the Khinchine propagation error
\begin{equation}\label{eq:endpoint-defect-delta}
 \delta_j
 :=j\bigl(-\log A_{p_j}\bigr)-\frac{\log2}{2},
 \qquad j\ge13,
\end{equation}
Then
\begin{equation}\label{eq:multiscale-defect-upper}
 \limsup_{m\to\infty}
 \frac{\bigl(\log C_{\mathbf q^{(m)}}^{(m)}\bigr)}{m d_m}
 \le
 \frac{\log2}{4}
 +\limsup_{m\to\infty}
 \frac{1}{2y_{1,m}}
 \sum_{j=13}^m
 \delta_j\bigl(y_{j,m}-y_{j+1,m}\bigr).
\end{equation}
For the diameter normalization, no lower bound on $y_{1,m}$ is needed.  Put
\[
 a_m:=y_{1,m},\qquad b_m:=y_{m,m}.
\]
Since the profile is nonnegative and has total mass one, $b_m\le1/m$. The exact diameter formula gives
\begin{equation}\label{eq:multiscale-diameter-exact}
 d_m=
 \frac{2s_m(a_m-b_m)}{(1+s_m b_m)(1+s_m a_m)}.
\end{equation}
In particular, $d_m\le2(a_m-b_m)$. Hence
\[
 \frac{m(a_m-b_m)}{\log m}
 \ge\frac{m d_m}{2\log m}\longrightarrow\infty.
\]
Since $b_m\le1/m$, it follows that
\[
 \frac{b_m}{a_m-b_m}\longrightarrow0,
 \qquad
 \frac{b_m}{a_m}\longrightarrow0.
\]
Since the denominator in \eqref{eq:multiscale-diameter-exact} is at most
$4$,
\[
 d_m\ge\frac{s_m a_m}{2}\left(1-\frac{b_m}{a_m}\right).
\]
Thus $d_m\to0$ and $b_m/a_m\to0$ imply $s_m a_m\to0$.  Since also
$s_m b_m\le s_m a_m$, we obtain
\begin{equation}\label{eq:multiscale-diameter-equivalence}
 d_m=2s_m a_m(1+o(1)).
\end{equation}

For $S\subset[m]$ with $|S|=j$, set
\[
 u_{j,S}:=\frac1j\mathbf1_S,
 \qquad
 t_{j,S,i}:=\begin{cases}p_j,&i\in S,\\2,&i\notin S,\end{cases}
 \qquad
 \mathbf t_{j,S}:=(t_{j,S,1},\ldots,t_{j,S,m}).
\]
Then $x(\mathbf t_{j,S})=u_{j,S}$ and $\sigma(\mathbf t_{j,S})=1$.

Choose nested sets
\[
 S_{1,m}\subset S_{2,m}\subset\cdots\subset S_{m,m}=[m]
\]
so that $S_{j,m}$ contains the indices of the $j$ largest coordinates of
$y(\mathbf q^{(m)})$; ties may be resolved arbitrarily.  Define
\[
 \lambda_{j,m}:=j\bigl(y_{j,m}-y_{j+1,m}\bigr),
 \qquad 1\le j\le m.
\]
Then $\lambda_{j,m}\ge0$, $\sum_{j=1}^m\lambda_{j,m}=1$, and
\begin{equation}\label{eq:canonical-level-decomposition}
 y(\mathbf q^{(m)})
 =\sum_{j=1}^m\lambda_{j,m}u_{j,S_{j,m}}.
\end{equation}
For a coordinate of rank $i$,
\[
 \sum_{j=i}^m\frac{\lambda_{j,m}}{j}
 =\sum_{j=i}^m(y_{j,m}-y_{j+1,m})
 =y_{i,m}.
\]

For $1\le j\le m$, define
\[
 h_j:=
 \begin{cases}
 \dfrac{\log2}{2j},&1\le j\le12,\\[2mm]
 -\log A_{p_j},&j\ge13.
 \end{cases}
\]
\textcolor{black}{For $j\le12$, the directed rearrangement and
\eqref{eq:Cmk-exact} give}
\[
 \textcolor{black}{C_{\mathbf t_{j,S_{j,m}}}^{(m)}
 \le C_{m,j}=2^{\frac{m-j}{2j}}B_{\R,j}^{\mult}.}
\]
\textcolor{black}{For $j\ge13$, the same rearrangement and
\eqref{eq:Khinchine-interpolation} give}
\[
 \textcolor{black}{C_{\mathbf t_{j,S_{j,m}}}^{(m)}
 \le C_{m,j}\le A_{p_j}^{-(m-j)}B_{\R,j}^{\mult}.}
\]
Define, for $1\le j\le m$,
\[
 U_{m,j}:=
 \begin{cases}
 2^{\frac{m-j}{2j}}B_{\R,j}^{\mult},&1\le j\le12,\\[2mm]
 A_{p_j}^{-(m-j)}B_{\R,j}^{\mult},&j\ge13.
 \end{cases}
\]
Then $C_{\mathbf t_{j,S_{j,m}}}^{(m)}\le U_{m,j}$ and
\begin{equation}\label{eq:multiscale-endpoint-bound}
 \log U_{m,j}=(m-j)h_j+\log B_{\R,j}^{\mult}.
\end{equation}
Moreover, since $B_{\R,j}^{\mult}\le Mj^\vartheta$ and
$\sum_j\lambda_{j,m}=1$,
\[
 \sum_{j=1}^m\lambda_{j,m}\log B_{\R,j}^{\mult}
 \le \log M+\vartheta\log m=O(\log m),
\]
with an absolute implicit constant.

Since $x(2,\ldots,2)=0$, \eqref{eq:canonical-level-decomposition} gives
\[
 x(\mathbf q^{(m)})
 =s_m\sum_{j=1}^m\lambda_{j,m}x(\mathbf t_{j,S_{j,m}}).
\]
The reciprocal-coordinate interpolation inequality therefore gives
\begin{align}
 \log C_{\mathbf q^{(m)}}^{(m)}
 &\le s_m\sum_{j=1}^m\lambda_{j,m}\log U_{m,j}\notag\\
 &\le s_m m\sum_{j=1}^m\lambda_{j,m}h_j
 +O(s_m\log m),                   \label{eq:multiscale-before-telescope}
\end{align}
where we have discarded the nonpositive terms
$-s_m\lambda_{j,m}j h_j$; here $h_j\ge0$ for every $j$, since
$A_{p_j}\le1$.

For $j\le12$ one has $j h_j=\log2/2$, while for $j\ge13$ equation \eqref{eq:endpoint-defect-delta} gives $j h_j=\log2/2+\delta_j$. Hence
\begin{align}
 \sum_{j=1}^m\lambda_{j,m}h_j
 &=\sum_{j=1}^m
 (y_{j,m}-y_{j+1,m})j h_j\notag\\
 &=\frac{\log2}{2}
 \sum_{j=1}^m(y_{j,m}-y_{j+1,m})
 +\sum_{j=13}^m
 \delta_j(y_{j,m}-y_{j+1,m})\notag\\
 &=\frac{\log2}{2}a_m
 +\sum_{j=13}^m
 \delta_j(y_{j,m}-y_{j+1,m}).         \label{eq:multiscale-telescope}
\end{align}
Equations~\eqref{eq:multiscale-before-telescope}--\eqref{eq:multiscale-diameter-equivalence} give
\[
 \limsup_{m\to\infty}
 \frac{\bigl(\log C_{\mathbf q^{(m)}}^{(m)}\bigr)}{m d_m}
 \le
 \frac{\log2}{4}
 +\limsup_{m\to\infty}
 \frac{1}{2a_m}
 \sum_{j=13}^m
 \delta_j(y_{j,m}-y_{j+1,m}),
\]
because
\[
 \frac{s_m\log m}{m d_m}
 =\frac{\log m}{2ma_m}(1+o(1))\longrightarrow0.
\]
Indeed, $b_m/a_m\to0$ gives
$a_m-b_m=a_m(1+o(1))$, and therefore
\[
 \frac{m a_m}{\log m}
 =\frac{m(a_m-b_m)}{\log m}(1+o(1))\longrightarrow\infty.
\]
Proposition~\ref{prop:exact-extension} gives
\[
 A_{p_j}^{-1}\ge2^{1/p_j-1/2}=2^{1/(2j)},
\]
so $\delta_j\ge0$.  For the upper bound, put $z=1/(j+1)$ and
\[
 H(z):=\log\Gamma\!\left(\frac{3}{2}-z\right)-\frac{1}{2}\log\pi.
\]
Haagerup's second branch gives the exact identity
\begin{equation}\label{eq:defect-secant-identity}
 \delta_j=\frac{H(0)-H(z)}{2z}.
\end{equation}
The function $H$ is convex, because
$H''(z)=\psi'(3/2-z)>0$.  Therefore
\[
 H(z)\ge H(0)+H'(0)z,
\]
and \eqref{eq:defect-secant-identity} yields
\begin{equation}\label{eq:endpoint-defect-uniform-bound}
 0\le \delta_j\le-\frac{H'(0)}{2}
 =\frac{\psi(3/2)}{2}
 =\vartheta-\frac{\log2}{2},
 \qquad j\ge13.
\end{equation}
Letting $j\to\infty$ in \eqref{eq:defect-secant-identity} gives
\[
 \delta_j\longrightarrow\vartheta-\frac{\log2}{2};
\]
in fact, convexity also shows that $\delta_j$ increases to this limit.

Since
\[
 \sum_{j=13}^m(y_{j,m}-y_{j+1,m})=y_{13,m},
\]
we obtain from \eqref{eq:endpoint-defect-uniform-bound}
\[
 \limsup_{m\to\infty}
 \frac{\bigl(\log C_{\mathbf q^{(m)}}^{(m)}\bigr)}{m d_m}
 \le
 \frac{\log2}{4}
 +\left(\frac{\vartheta}{2}-\frac{\log2}{4}\right)
 \limsup_{m\to\infty}\frac{y_{13,m}}{a_m}.
\]
Since $a_m=y_{1,m}=Y_1(\mathbf q^{(m)})$, this is \eqref{eq:intro-main-defect-tail-bound}.
\end{proof}

Let $(\mathbf q^{(m)})_{m\ge2}$ satisfy the assumptions of Theorem~\ref{thm:main-E}, and put
\[
 d_m=d(\mathbf q^{(m)}),\qquad
 s_m=\sigma(\mathbf q^{(m)}),\qquad
 a_m=Y_1(\mathbf q^{(m)}).
\]
Assume, in addition, that $a_m\ge1/12$ for all sufficiently large $m$.
For all sufficiently large $m$, choose $k_m\in\{1,\ldots,11\}$ and $\theta_m\in[0,1]$ such that
\[
 \frac{1}{k_m+1}\le a_m\le\frac{1}{k_m},
 \qquad
 a_m=\frac{\theta_m}{k_m}+\frac{1-\theta_m}{k_m+1}.
\]
Let $z^{(m)}$ be the decreasing probability vector whose first $k_m$ coordinates equal $a_m$, whose $(k_m+1)$-st coordinate equals $(1-\theta_m)/(k_m+1)$, and whose remaining coordinates are zero.  Equivalently,
\[
 z^{(m)}=\theta_m\frac1{k_m}\mathbf1_{\{1,\ldots,k_m\}}
 +(1-\theta_m)\frac1{k_m+1}\mathbf1_{\{1,\ldots,k_m+1\}}.
\]
For $1\le r\le k_m$,
\[
 X_r(y(\mathbf q^{(m)}))\le r a_m=X_r(z^{(m)}),
\]
and for $r\ge k_m+1$ both partial sums are bounded by, and $X_r(z^{(m)})$ equals, $1$.  Hence
\[
 y(\mathbf q^{(m)})\prec z^{(m)}.
\]
Define $\mathbf r^{(m)}$ by
\[
 \frac1{r_i^{(m)}}=\frac12+\frac{s_m}{2}z_i^{(m)},\qquad 1\le i\le m.
\]
\textcolor{black}{The definition of the canonical profile constant and
Theorem~\ref{thm:main-C} give}
\[
 \textcolor{black}{C_{\mathbf q^{(m)}}^{(m)}
 \le \mathfrak C_{m,s_m}(y(\mathbf q^{(m)}))
 \le \mathfrak C_{m,s_m}(z^{(m)})
 = C_{\mathbf r^{(m)}}^{(m)}.}
\]
Since $k_m,k_m+1\le12$, \eqref{eq:Cmk-exact} and reciprocal-coordinate interpolation, with weights $s_m\theta_m$, $s_m(1-\theta_m)$ and $1-s_m$, give
\begin{align*}
 \log C_{\mathbf q^{(m)}}^{(m)}
 &\le \frac{s_m\log2}{2}
 \left[
 \theta_m\frac{m-k_m}{k_m}
 +(1-\theta_m)\frac{m-k_m-1}{k_m+1}
 \right]\\
 &\quad+s_m\theta_m\log B_{\R,k_m}^{\mult}
 +s_m(1-\theta_m)\log B_{\R,k_m+1}^{\mult}.
\end{align*}
Since $k_m\le11$ and $B_{\R,k}^{\mult}$ is fixed for $1\le k\le12$, the last two terms are $O(s_m)$. Moreover,
\begin{align*}
 \theta_m\frac{m-k_m}{k_m}
 +(1-\theta_m)\frac{m-k_m-1}{k_m+1}
 &=m\left(\frac{\theta_m}{k_m}+\frac{1-\theta_m}{k_m+1}\right)-1\\
 &=m a_m-1.
\end{align*}
Thus
\begin{equation}\label{eq:twelve-level-upper-open}
 \log C_{\mathbf q^{(m)}}^{(m)}
 \le \frac{s_m(m a_m-1)\log2}{2}+O(s_m)
 =\frac{s_m m a_m\log2}{2}+O(s_m).
\end{equation}
For the diameter normalization, put
\[
 b_m:=y_m^\downarrow(\mathbf q^{(m)}).
\]
Since $a_m\ge1/12$ and the normalized profile has total mass one,
\[
 b_m\le \frac{1-a_m}{m-1}\le \frac{1}{m-1},
 \qquad
 \frac{b_m}{a_m}\le \frac{12}{m-1}\longrightarrow0.
\]
By the exact diameter formula \eqref{eq:diameter-radial-profile},
\[
 d_m=
 \frac{2s_m(a_m-b_m)}{(1+s_m b_m)(1+s_m a_m)}.
\]
Because $a_m-b_m$ is bounded away from zero for all sufficiently large $m$ and $d_m\to0$, it follows that $s_m\to0$. Therefore
\[
 d_m=2s_m a_m(1+o(1)).
\]
Because $a_m\ge1/12$, the error term in \eqref{eq:twelve-level-upper-open} satisfies
\[
 \frac{O(s_m)}{m d_m}=O\!\left(\frac{1}{m a_m}\right)\longrightarrow0.
\]
Consequently,
\[
 \limsup_{m\to\infty}
 \frac{\bigl(\log C_{\mathbf q^{(m)}}^{(m)}\bigr)}{m d_m}
 \le\frac{\log2}{4}.
\]
The reverse inequality is the universal lower coefficient from Theorem~\ref{thm:main-A}(iii).  Therefore
\[
 \frac{\log C_{\mathbf q^{(m)}}^{(m)}}{m d_m}
 \longrightarrow\frac{\log2}{4},
\]
which proves case \textup{(a)} of Corollary~\ref{cor:intro-mainD-collapse}.

For case \textup{(b)}, Theorem~\ref{thm:main-E} gives
\[
 \limsup_{m\to\infty}
 \frac{\log C_{\mathbf q^{(m)}}^{(m)}}{m d_m}
 \le \frac{\log2}{4}
\]
when $y_{13,m}/y_{1,m}\to0$, while Theorem~\ref{thm:main-A}(iii) gives the
matching lower bound.  Thus the normalized logarithm converges to
$(\log2)/4$, as claimed.

\section*{Statements and declarations}
\textcolor{black}{\noindent\textbf{Funding.} D. N\'u\~nez-Alarc\'on and
D. M. Pellegrino are supported by CNPq Grants No.~406457/2023-9
(CNPq/MCTI Call No.~10/2023) and No.~403964/2024-5 (MCTI/CNPq Call
No.~16/2024), both from the Conselho Nacional de Desenvolvimento
Cient\'ifico e Tecnol\'ogico (CNPq, Brazil). D. M. Pellegrino is also
supported by CNPq Grant No.~305807/2025-0. E. V. Teixeira gratefully
acknowledges support from the Grayce B. Kerr Chair funds at Oklahoma State
University.}

\textcolor{black}{Part of this work was carried out under the DARPA ExpMath
project \emph{``A Human-Centered Framework for AI-Mathematician
Collaboration in Research-Level Mathematics''} (Agreement
No.~HR0011262E029), in which E. V. Teixeira serves as a co-principal
investigator and gratefully acknowledges partial support. The views and
conclusions expressed here are those of the authors and should not be
interpreted as representing the official policies of the Department of
Defense or the U.S. Government.}

\textcolor{black}{The authors thank Jaume de Dios, Chinmay Hegde, Claudio
Silva, and Samuel Westrick for helpful discussions related to the ExpMath
collaboration.}

\medskip
\noindent\textbf{Competing interests.} The authors declare that they have no competing interests.

\noindent\textbf{Data availability.} No datasets were generated or analyzed in the course of this work.

\medskip
\noindent\textbf{Machine assistance.} Generative AI tools were used for language editing, bibliographic searches, consistency checks, and routine \LaTeX{} preparation.  All mathematical statements, proofs, and citations were independently checked by the authors, who take full responsibility for the manuscript.


\begin{thebibliography}{99}
\bibitem {ABPS2014}N.~Albuquerque, F.~Bayart, D.~Pellegrino and
J.~B.~Seoane-Sep\'ulveda, \emph{Sharp generalizations of the multilinear
Bohnenblust--Hille inequality}, J. Funct. Anal. \textbf{266} (2014), no.~6,
3726--3740,
\href{https://doi.org/10.1016/j.jfa.2013.08.013}{doi:10.1016/j.jfa.2013.08.013}.
\bibitem {AraujoPellegrino2019}G.~Ara\'ujo and D.~Pellegrino,
\emph{A Gale--Berlekamp permutation-switching problem in higher dimensions},
European J. Combin. \textbf{77} (2019), 17--30,
\href{https://doi.org/10.1016/j.ejc.2018.10.007}{doi:10.1016/j.ejc.2018.10.007}.
\bibitem {ArunachalamDuttEscuderoPalazuelos2025}
S.~Arunachalam, A.~Dutt, F.~Escudero Guti\'errez and C.~Palazuelos,
\emph{A cb-Bohnenblust--Hille inequality with constant one and its applications in learning theory},
Math. Ann. \textbf{392} (2025), no.~3, 3367--3396,
\href{https://doi.org/10.1007/s00208-025-03142-5}{doi:10.1007/s00208-025-03142-5}.
\bibitem {BayartPellegrinoSeoane2014}F.~Bayart, D.~Pellegrino and
J.~B.~Seoane-Sep\'ulveda, \emph{The Bohr radius of the $n$-dimensional
polydisk is equivalent to $\sqrt{(\log n)/n}$}, Adv. Math. \textbf{264}
(2014), 726--746,
\href{https://doi.org/10.1016/j.aim.2014.07.029}{doi:10.1016/j.aim.2014.07.029}.
\bibitem {BenedekPanzone1961}A.~Benedek and R.~Panzone,
\emph{The space $L^p$, with mixed norm}, Duke Math. J. \textbf{28} (1961),
no.~3, 301--324,
\href{https://doi.org/10.1215/S0012-7094-61-02828-9}{doi:10.1215/S0012-7094-61-02828-9}.
\bibitem {BohnenblustHille1931}H.~F. Bohnenblust and E.~Hille,
\emph{On the absolute convergence of Dirichlet series}, Ann. of Math. (2)
\textbf{32} (1931), no.~3, 600--622,
\href{https://doi.org/10.2307/1968255}{doi:10.2307/1968255}.
\bibitem {CostaNunezPellegrinoRaposo2026}F.~Costa Jr., D.~N\'u\~nez-Alarc\'on,
D.~Pellegrino and A.~Raposo Jr., \emph{Characterizing the growth of the
anisotropic Bohnenblust--Hille constants}, J. Math. Anal. Appl. \textbf{556}
(2026), no.~1, Paper No.~130220, 18~pp.,
\href{https://doi.org/10.1016/j.jmaa.2025.130220}{doi:10.1016/j.jmaa.2025.130220}.
\bibitem {DinizMunozPellegrinoSeoane2014}D.~Diniz, G.~A.~Mu\~noz-Fern\'andez, D.~Pellegrino and
J.~B.~Seoane-Sep\'ulveda, \emph{Lower bounds for the constants in the Bohnenblust--Hille inequality: the case of real scalars}, Proc. Amer. Math. Soc. \textbf{142} (2014), no.~2, 575--580,
\href{https://doi.org/10.1090/S0002-9939-2013-11791-0}{doi:10.1090/S0002-9939-2013-11791-0}.
\bibitem {Haagerup1981}U.~Haagerup, \emph{The best constants in the Khintchine
inequality}, Studia Math. \textbf{70} (1981), no.~3, 231--283,
\href{https://doi.org/10.4064/sm-70-3-231-283}{doi:10.4064/sm-70-3-231-283}.
\bibitem {MarshallOlkinArnold2011}A.~W. Marshall, I.~Olkin and B.~C. Arnold,
\emph{Inequalities: Theory of Majorization and Its Applications}, 2nd ed.,
Springer Series in Statistics, Springer, New York, 2011,
\href{https://doi.org/10.1007/978-0-387-68276-1}{doi:10.1007/978-0-387-68276-1}.
\bibitem {Montanaro2012}A.~Montanaro,
\emph{Some applications of hypercontractive inequalities in quantum information theory},
J. Math. Phys. \textbf{53} (2012), no.~12, 122206, 15~pp.,
\href{https://doi.org/10.1063/1.4769269}{doi:10.1063/1.4769269}.
\bibitem {PellegrinoJNT2016}D.~Pellegrino, \emph{The optimal constants of the
mixed $(\ell_{1},\ell_{2})$-Littlewood inequality}, J. Number Theory
\textbf{160} (2016), 11--18,
\href{https://doi.org/10.1016/j.jnt.2015.08.007}{doi:10.1016/j.jnt.2015.08.007}.
\bibitem {PellegrinoTeixeira2018}D.~Pellegrino and E.~V. Teixeira,
\emph{Towards sharp Bohnenblust--Hille constants}, Commun. Contemp. Math.
\textbf{20} (2018), no.~3, 1750029, 33~pp.,
\href{https://doi.org/10.1142/S0219199717500298}{doi:10.1142/S0219199717500298}.
\bibitem {Rado1952}\textcolor{black}{R.~Rado, \emph{An inequality},
J. London Math. Soc. \textbf{27} (1952), no.~1, 1--6,
\href{https://doi.org/10.1112/jlms/s1-27.1.1}{doi:10.1112/jlms/s1-27.1.1}.}

\bibitem {SloteVolbergZhang2024}J.~Slote, A.~Volberg and H.~Zhang,
\emph{Bohnenblust--Hille inequality for cyclic groups}, Adv. Math. \textbf{452}
(2024), Paper No.~109824,
\href{https://doi.org/10.1016/j.aim.2024.109824}{doi:10.1016/j.aim.2024.109824}.

\end{thebibliography}
\end{document}